\newtheorem{theorem}{Theorem}[section]
\newtheorem{corollary}[theorem]{Corollary}
\newtheorem{lemma}[theorem]{Lemma}
\newtheorem{proposition}[theorem]{Proposition}
\begin{document}

\title{\bf On the minimum degree of power graphs of finite nilpotent groups}
\author{Ramesh Prasad Panda \and Kamal Lochan Patra \and Binod Kumar Sahoo}

\date{}

\maketitle

\begin{abstract}

The power graph $\mathcal{P}(G)$ of a group $G$ is the simple graph with vertex set $G$ and two vertices are adjacent whenever one of them is a positive power of the other. In this paper, for a finite noncyclic nilpotent group $G$, we study the minimum degree $\delta(\mathcal{P}(G))$ of $\mathcal{P}(G)$. Under some conditions involving the prime divisors of $|G|$ and the Sylow subgroups of $G$, we identify certain vertices associated with the generators of maximal cyclic subgroups of $G$ such that $\delta(\mathcal{P}(G))$ is equal to the degree of one of these vertices. As an application, we obtain $\delta(\mathcal{P}(G))$ for some classes of finite noncyclic abelian groups $G$.
\end{abstract}

\noindent{\bf Keywords.} Nilpotent group, Power graph, Minimum degree, Euler's totient function

\noindent{\bf AMS subject classification.} 20D15, 05C25, 05C07

\section{Introduction}

There are various graphs associated with groups which have been studied in the literature, e.g., Cayley graphs, commuting graphs, prime graphs, generating graphs.
The notion of directed power graph of a group was introduced by Kelarev and Quinn in \cite{kel-2000}.
The underlying undirected graph, simply termed as the `power graph', of a group was first considered by Chakrabarty et al. in \cite{CGS-2009}.
Several researchers have then studied both the directed and undirected power graphs of groups from different viewpoints, see \cite{AKC, KSCC} and the references therein.

The {\it power graph} of a group $G$, denoted by $\mathcal{P}(G)$, is the simple graph with vertex set $G$ and two vertices are adjacent whenever one of them is a positive power of the other. Cameron and Ghosh \cite{cameron} studied an isomorphism problem associated with a finite group and its power graph. Curtin and Pourgholi \cite{cur-2016} investigated an extremal property concerning the clique number of power graphs of finite groups. Kirkland et al. \cite{kirkland} derived explicit formulas for the
complexity of power graphs of various finite groups. Additionally, parameters of power graphs, such as chromatic number \cite{shitov}, spectra \cite{mehranian}, metric dimension \cite{FMW}, etc., have been studied recently. In \cite{MR3366577a}, Doostabadi et al. considered power graphs of finite nilpotent groups and obtained their diameter and number of components. Whereas, Chattopadhyay et al. \cite{CPS-2} investigated the vertex connectivity of the aforemetioned graphs.

In this paper, for a finite noncyclic nilpotent group $G$, we study the minimum degree of $\mathcal{P}(G)$ under certain conditions involving the prime divisors of the order $|G|$ of $G$ and the Sylow subgroups of $G$. Our objective will be to identify certain vertices of $\mathcal{P}(G)$ such that the minimum degree of $\mathcal{P}(G)$ is equal to the degree of one of these vertices.

\subsection{Graphs and groups}

Let $\Gamma$ be a simple graph with vertex set $V$. The {\it edge connectivity} of $\Gamma$, denoted by $\kappa'(\Gamma)$, is the minimum number of edges whose deletion gives a disconnected subgraph of $\Gamma$. For $v\in V$, we denote by $\deg(v)$ the degree of $v$ in $\Gamma$. The {\it minimum degree} of $\Gamma$, denoted by $\delta(\Gamma)$, is defined by $\delta(\Gamma)=\min \{\deg(v):v\in V\}$.
It is known that $\kappa'(\Gamma)\leq \delta(\Gamma)$ and that $\kappa'(\Gamma)=\delta(\Gamma)$ if the diameter of $\Gamma$ is at most $2$.

Let $G$ be a group with identity element $e$. The order of an element $x\in G$ is denoted by $o(x)$. Recall that $G$ is said to be {\it nilpotent} if its lower central series $G=G_1 \geq G_2 \geq G_3 \geq G_4 \geq \cdots $ terminates at $\{e\}$ after a finite number of steps, where $G_{i+1}=[G_i, G]$ for $i\geq 1$. If $G$ is a finite group, then $G$ being nilpotent is equivalent to any of the following statements:
\begin{enumerate}[\rm(a)]
\item Every Sylow subgroup of $G$ is normal (equivalently, there is a unique Sylow $p$-subgroup of $G$ for every prime divisor $p$ of $|G|$).
\item $G$ is the direct product of its Sylow subgroups.
\item For $x,y\in G$, $x$ and $y$ commute whenever $o(x)$ and $o(y)$ are relatively prime.
\end{enumerate}
Every abelian group is nilpotent. Also, every finite group of prime power order is nilpotent.

A cyclic subgroup $M$ of $G$ is called a {\it maximal cyclic subgroup} if it is not properly contained in any cyclic subgroup of $G$. We denote by $\langle x\rangle$ the cyclic subgroup of $G$ generated by an element $x\in G$ and by $\mathcal{M}(G)$ the collection of all maximal cyclic subgroups of $G$. If $G$ is finite, then every element of $G$ is contained in a maximal cyclic subgroup. This statement, however, need not hold in an infinite group: $(\mathbb{Q},+)$ is an example.

\subsection{Euler's totient function}

We denote by $\phi$ the Euler's totient function. Then $\phi(p^k)=p^{k-1}(p-1)=p^{k-1}\phi(p)$ for any prime $p$ and positive integer $k$, and $\underset{d\mid m}{\sum} \phi(d) = m$ for every positive integer $m$. Recall that $\phi$ is a multiplicative function, that is, $\phi(ab)=\phi(a)\phi(b)$ for any two positive integers $a,b$ which are relatively prime.

Throughout the paper, $n>1$ is a positive integer and $r$ is the number of distinct prime divisors of $n$. We write the prime power factorization of $n$ as
$$n=p_1^{\alpha_1}p_2^{\alpha_2} \cdots p_r^{\alpha_r},$$
where $\alpha_1, \alpha_2, \ldots, \alpha_r$ are positive integers and $p_1,p_2,\dots, p_r$ are prime numbers with $p_1 < p_2 < \cdots < p_r$. For a positive integer $m$, we define $[m]=\{1,2,\dots,m\}$.

We have $\phi(n)= \phi\left(p_1^{\alpha_1}\right)\phi\left(p_2^{\alpha_2}\right) \cdots \left(p_r^{\alpha_r}\right)$.
By \cite[Lemma 3.1]{CPS-2}, $(r+1)\phi\left(\underset{i\in [r]}\prod p_i\right) \geq \underset{i\in [r]}\prod p_i$ and equality holds if and only if $(r, p_1)=(1,2)$ or $(r, p_1,p_2)=(2,2,3)$. If $p_1 \geq r+1$, then it follows from the proof of Corollary 1.4 in \cite{CPS} that $2\phi\left(\underset{i\in [r]}\prod p_i\right) \geq \underset{i\in [r]}\prod p_i$ and equality holds if and only if $(r,p_1)=(1,2)$. Note that if $2\phi\left(\underset{i\in [r]}\prod p_i\right) \geq \underset{i\in [r]}\prod p_i$, then $2\phi\left(\underset{i\in I} \prod p_i\right)> \underset{i\in I} \prod p_i$ for any proper subset $I$ of $[r]$, where $\underset{i\in I} \prod p_i =1$ if $I=\emptyset$.

\subsection{Known results}

Let $G$ be a finite group. Note that two distinct vertices $x,y\in G$ are adjacent in $\mathcal{P}(G)$ if and only if $x\in\langle y\rangle$ or $y\in\langle x\rangle$. Observe that $\deg(x) = \deg(y)$ if $\langle x \rangle = \langle y \rangle$ and that $\deg(x)=o(x)-1$ if $\langle x\rangle\in \mathcal{M}(G)$. The identity element $e$ of $G$ is adjacent to all other vertices of $\mathcal{P}(G)$. So $\mathcal{P}(G)$ is of diameter at most 2 and hence $\kappa'(\mathcal{P}(G))=\delta(\mathcal{P}(G))$.  Thus it is enough to find the minimum degree of $\mathcal{P}(G)$ in order to determine the edge connectivity of $\mathcal{P}(G)$. By \cite[Theorem 2.12]{CGS-2009}, $\mathcal{P}(G)$ is a complete graph if and only if $G$ is cyclic group of prime power order.

Let $C_n$ denote the cyclic group of order $n$. If $r=1$, then $n$ is a prime power and so $\mathcal{P}(C_n)$ is a complete graph. In this case, $\delta(\mathcal{P}(C_n))=n-1=\deg(x)$ for every $x\in C_n$. For $r=2$, the minimum degree of $\mathcal{P}(C_n)$ was obtained in \cite[Theorem 4.6(i)]{PK-CA}. For $r\geq 3$, if $2\phi\left(\underset{i\in [r]} \prod p_i\right)\geq \underset{i\in [r]} \prod p_i$ or if $\phi(p_{j+1})\geq r\phi(p_j)$ for every $j\in [r-1]$, then at most $r-1$ vertices of $\mathcal{P}(C_n)$ are identified in \cite[Theorem 1.3]{PPS-JAA} such that $\delta(\mathcal{P}(C_n))$ is equal to the degree of one of these vertices. Further, when $r=3$ or $n$ is a product of distinct primes, two such vertices of $\mathcal{P}(C_n)$ are obtained in \cite[Theorems 1.2, 1.5]{PPS-JAA} without imposing any condition on the prime divisors of $n$.

If $G$ is a finite $p$-group for some prime $p$, then $\delta(\mathcal{P}(G)) = \deg(y)$ for any $y\in G$ such that $\langle y\rangle\in\mathcal{M}(G)$ is of minimum order.
This can be proved as follows: We show that $\deg(x)\geq \deg(y)$ for every $x\in G$. Consider $\langle z\rangle\in\mathcal{M}(G)$ containing $x$. Then $o(z)\geq o(y)$. Since $\langle z\rangle$ is a cyclic $p$-group, the induced subgraph $\mathcal{P}(\langle z\rangle)$ of $\mathcal{P}(G)$ is complete. Then $\deg(x)\geq o(z)-1\geq o(y)-1=\deg(y)$.

Recently, it is proved in \cite[Proposition 2.7]{PPS-new} that if $G$ is a noncyclic nilpotent group of order $n$ with $r=2$ in which the Sylow $p_1$-subgroup is noncyclic and the Sylow $p_2$-subgroup is cyclic, then $\delta(\mathcal{P}(G)) = \deg(y_1)$, where $\langle y_1\rangle$ is a maximal cyclic subgroup of the Sylow $p_1$-subgroup of $G$ of minimum order.

\subsection{Main results}

Let $G$ be a noncyclic nilpotent group of order $n$. We denote by $P_i$ the unique Sylow $p_i$-subgroup of $G$ for $i\in [r]$. Then $|P_i|=p_i^{\alpha_i}$ for $i\in [r]$. The following theorem is proved in Section \ref{sec-main-2}.

\begin{theorem}\label{main-2}
Suppose that $r\geq 2$ and that $2\phi\left(\underset{i\in [r]}\prod p_i\right) \geq \underset{i\in [r]}\prod p_i$. If $\delta(\mathcal{P}(G))=\deg(x)$ for some $x\in G$, then the following hold:
\begin{enumerate}[\rm(i)]
\item $x$ is contained in a unique element $M$ of $\mathcal{M}(G)$.
\item There exist $i\in [r]$ and a generator $y$ of $M$ such that $x = y^{p_i^{\lambda_i}}$,
where $|M\cap P_i|=p_i^{\gamma_i}$ and $\lambda_i \in\{0,1,\ldots,\gamma_i\}$ is the largest integer with the property that $M$ is the only element of $\mathcal{M}(G)$ containing $y^{p_i^{\lambda_i}}$.
\end{enumerate}
\end{theorem}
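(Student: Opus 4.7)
The plan is to exploit the nilpotence of $G$. Writing $G = P_1\times\cdots\times P_r$, I would first verify that every maximal cyclic subgroup factors uniquely as $M = M_1\times\cdots\times M_r$ with $M_i = M\cap P_i \in \mathcal{M}(P_i)$, and conversely that every such product belongs to $\mathcal{M}(G)$. Writing $x=(x_1,\ldots,x_r)\in G$, this yields the product description $\mathcal{M}_x := \{M\in\mathcal{M}(G):x\in M\} = \prod_{i\in[r]} \mathcal{M}_{x_i}^{P_i}$; in particular $|\mathcal{M}_x|=1$ iff each $x_i$ lies in a unique maximal cyclic subgroup of $P_i$, and $|M|=\prod_i p_i^{\gamma_i}$ is controlled coordinate by coordinate.

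Next I would describe the closed neighborhood $N[x]$ explicitly. Two vertices $x,z$ are adjacent iff their cyclic subgroups are comparable, so $N[x]\subseteq\bigcup_{M\in\mathcal{M}_x} M$. Inside a single $M=\prod_j M_j$, the subgroup lattice is the product of the chains of subgroups of each $M_j$, so comparability of $\langle x\rangle$ and $\langle z\rangle$ reduces to a coordinate-wise divisibility condition on the orders $o(x_j),o(z_j)$. Assembling this over $\mathcal{M}_x$ via inclusion--exclusion gives a formula for $\deg(x)+1$ whose local factors in each coordinate are expressible through $\phi$ and the orders of the $x_i$'s; this is the object I would try to minimize.

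The main argument is by contradiction. Assuming $\delta(\mathcal{P}(G))=\deg(x)$ with (i) or (ii) failing, I would construct $x'\in G$ with $\deg(x')<\deg(x)$. For (i), if $|\mathcal{M}_x|\geq 2$, some coordinate $x_i$ sits in more than one maximal cyclic subgroup of $P_i$; replacing the offending coordinates by generators of the components of a single fixed $M\in\mathcal{M}_x$ forces $\mathcal{M}_{x'}=\{M\}$, and after unwinding the inclusion--exclusion formula the net change in degree is governed by an inequality of the form $2\phi(\prod_{i\in I} p_i)>\prod_{i\in I} p_i$ on a proper subset $I\subsetneq[r]$, which holds by the remark recorded at the end of the totient subsection (the full hypothesis $2\phi(\prod_{i\in[r]} p_i)\geq\prod_{i\in[r]} p_i$ is what handles the boundary case $I=[r]$). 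For (ii), with $x$ in a unique $M$ generated by $y$, the prescribed form $x=y^{p_i^{\lambda_i}}$ amounts to $x_j$ being a generator of $M_j$ for every $j\neq i$ together with maximality of $\lambda_i$; if this fails for two distinct indices, or if it holds for one $i$ but $\lambda_i$ is not maximal, a one-coordinate adjustment inside $M$ produces $x'$ still lying in a unique maximal cyclic subgroup but with strictly smaller degree via the same totient comparison.

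The main obstacle I expect is the bookkeeping in the contradiction step: when $|\mathcal{M}_x|>1$, one must simultaneously balance the gain in $|\bigcup_{M\in\mathcal{M}_x} M|$ against the inclusion--exclusion corrections from overlaps among distinct $M\in\mathcal{M}_x$, and against the fact that not every $z\in M$ is actually adjacent to $x$ (the product subgroup lattice of $M$ contains incomparable elements). Calibrating these three contributions so that the totient inequalities apply on exactly the right subset $I\subseteq[r]$, and verifying that the maximality of $\lambda_i$ in (ii) is precisely the threshold at which any further reduction of $x$ forces $\mathcal{M}_x$ to grow, is the most delicate and calculation-heavy part of the proof.
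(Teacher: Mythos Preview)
Your overall strategy---assume (i) or (ii) fails and exhibit a vertex of strictly smaller degree---is exactly the paper's, but two places in your sketch hide the real content.

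For (i), the inclusion--exclusion description of $N[x]$ over all $M\in\mathcal{M}_x$ is more machinery than is needed and would be hard to control. The paper's Lemma~\ref{deg.comp1} bypasses it: if $x$ lies in two distinct $M,M'\in\mathcal{M}(G)$ with $|M|$ minimal among them, then crudely $\deg(x)\geq o(x)-1+\phi(|M|)+\phi(|M'|)\geq o(x)-1+2\phi(|M|)$, and comparing with $\deg(y)=|M|-1$ for a generator $y$ of $M$ reduces directly to $2\phi(|M|)\geq |M|$, i.e.\ to the hypothesis on the \emph{full} set $[r]$, not a proper subset as you suggest.

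For (ii), the claim ``a one-coordinate adjustment inside $M$ produces $x'$ with strictly smaller degree via the same totient comparison'' is precisely the assertion whose proof carries the theorem, and it is not true for an arbitrary coordinate. If $x_j,x_l$ are both non-generators of $M_j,M_l$ with $j<l$, the paper (Proposition~\ref{proplessthan}) raises the $p_j$-part of $x$ (takes $w$ with $w^{p_j}=x$) and shows $\deg(w)<\deg(x)$; the inequality $p_j<p_l$ is used essentially, and the analogous move at index $l$ need not decrease the degree. Moreover, when some $x_k=e$ the adjustment is not ``inside $M$'' at all: the paper first forces $|[r]\setminus\tau_x|\leq 1$ by multiplying $x$ by an order-$p_k$ element (Proposition~\ref{propimp}), and then treats the single-missing-prime case with a separate two-branch computation (Proposition~\ref{propimp-2}) comparing $\deg(x)$ against $\deg\bigl(y^{p_l^{\eta_l}}\bigr)$ or $\deg\bigl(y^{p_k^{\alpha_k}}\bigr)$ according to whether $k<l$ or $k>l$. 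Finally, the maximality of $\lambda_i$ is pinned down via the monotonicity Lemma~\ref{lem32} inside the cyclic group $M$. Your sketch does not distinguish these cases or specify which coordinate to move, and that choice is exactly where the totient hypothesis is invoked; without it the ``one-coordinate adjustment'' step is a restatement of what has to be proved rather than a proof.
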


In Theorem \ref{main-2}(ii), observe that $\lambda_i=\gamma_i$ if and only if $P_i$ is cyclic if and only if $\gamma_i=\alpha_i$.
We prove the following theorem in Section \ref{sec-main-3-4} improving Theorem \ref{main-2} under the assumption that the center $Z(P_j)$ of $P_j$ is noncyclic whenever $P_j$ is noncyclic for $j\in [r]$.

\begin{theorem}\label{main-3}
Suppose that $r\geq 2$ and that $Z(P_j)$ is noncyclic for every $j\in J:=\{i\in [r]: P_i \text{ is noncyclic}\}$. Then the following hold:
\begin{enumerate}[\rm(i)]
\item If $J\neq [r]$ and $2\phi\left(\underset{i\in [r]} \prod p_i\right)\geq \underset{i\in [r]} \prod p_i$, then there exists $k\in [r]\setminus J$ such that $\delta(\mathcal{P}(G)) = \deg\left(y^{p_k^{\alpha_k}}\right)$, where $\langle y\rangle\in\mathcal{M}(G)$ is of minimum order.
\item If $J=[r]$ and $2\phi\left(\underset{i\in [r]\setminus\{l\}} \prod p_i\right)\geq \underset{i\in [r]\setminus\{l\}} \prod p_i$ for every $l\in [r]$, then $\delta(\mathcal{P}(G)) = \deg(y)$, where $\langle y\rangle\in\mathcal{M}(G)$ is of minimum order.
\end{enumerate}
\end{theorem}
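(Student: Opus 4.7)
My plan is to treat the two cases separately, since the $\phi$-hypothesis in case (i) is exactly that of Theorem \ref{main-2} while case (ii)'s hypothesis is strictly weaker and does not permit a direct appeal to it.

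For case (i), I would apply Theorem \ref{main-2} to any minimum-degree vertex $x$: this yields a unique $M\in\mathcal{M}(G)$ with $x\in M$, a generator $z$ of $M$, and an index $i\in[r]$ with $x=z^{p_i^{\lambda_i}}$. The goal is to replace this data with the pair $(M_{\min},k)$ for some $k\in[r]\setminus J$. The calculational tool is the formula
\[
\deg(x)+1=|\langle x\rangle|+\bigl|\{w\in G:x\in\langle w\rangle\}\bigr|-\phi(o(x)),
\]
which, in a nilpotent group $G=\prod_j P_j$, factors componentwise and yields an explicit expression for $\deg\bigl(z^{p_i^{\lambda_i}}\bigr)$. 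A direct computation then gives $\deg\bigl(y^{p_k^{\alpha_k}}\bigr)$ for $y$ a generator of $M_{\min}$ and $k\in[r]\setminus J$; one needs this to be no larger than $\deg\bigl(z^{p_i^{\lambda_i}}\bigr)$ for every admissible $(M,z,i,\lambda_i)$. The hypothesis that $Z(P_j)$ is noncyclic for $j\in J$ enters precisely here: for each such $j$ it produces multiple maximal cyclic subgroups of $P_j$ through $\langle z_j^{p_j^{\lambda_j}}\rangle$, inflating the Sylow factor of $\bigl|\{w:z^{p_i^{\lambda_i}}\in\langle w\rangle\}\bigr|$ beyond its counterpart for a cyclic Sylow, so that the minimizing $i$ can indeed be taken in $[r]\setminus J$.

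For case (ii), $[r]\setminus J=\emptyset$, so I must argue directly that $\delta(\mathcal{P}(G))=\deg(y)=|M_{\min}|-1$ for $y$ a generator of $M_{\min}$. The upper bound is immediate, since any generator of $M_{\min}$ is contained in no other maximal cyclic subgroup. For the lower bound $\deg(x)\geq|M_{\min}|-1$ for every $x\in G$, I would again use the factored degree formula and argue by cases on the set $T=\{j\in[r]:x_j=e\}$: when $T=\emptyset$ the inequality is immediate from $o(x_j)\geq m_j$ (the minimum maximal-cyclic order in $P_j$) for each $j$, while for $T\neq\emptyset$ I would invoke the hypothesis $2\phi\bigl(\prod_{i\in[r]\setminus\{l\}}p_i\bigr)\geq\prod_{i\in[r]\setminus\{l\}}p_i$ with $l\in T$ in order to dominate the subtracted $\phi(o(x))$-term by the $|P_l|$-factor that the trivial $l$-th component contributes to $\bigl|\{w:x\in\langle w\rangle\}\bigr|$. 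In both cases, the hypothesis that $Z(P_j)$ is noncyclic controls the value of $m_j$ and the count of maximal cyclic subgroups of $P_j$ passing through $\langle x_j\rangle$, keeping the estimate sharp.

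The main obstacle I anticipate is the case analysis in (ii) when several components of $x$ are nontrivial yet fail to generate a maximal cyclic subgroup of their respective Sylow: then neither the trivial-component trick nor the generator-of-max-cyclic argument applies directly, and one must induct on the number of such ``deficient'' components, leveraging the $p$-group base case $\delta(\mathcal{P}(P_j))=o(y_j)-1$ recalled in the excerpt together with the successive $\phi$-inequalities to bootstrap the comparison up the Sylow decomposition.
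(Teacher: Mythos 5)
There is a genuine gap, and it is essentially the same one in both halves: you never supply the mechanism through which the hypothesis that $Z(P_j)$ is noncyclic enters quantitatively. The paper's proof rests on one key lemma (Proposition \ref{prop.abelian}): if $Z(P_k)$ is noncyclic for some $k\in\tau_x$ and $2\phi\bigl(\prod_{i\in\tau_x\setminus\{k\}}p_i\bigr)\geq\prod_{i\in\tau_x\setminus\{k\}}p_i$ (or $r=3$), then $\deg(x)\leq\deg\bigl(x^{p_k}\bigr)$. Its proof counts neighbors lost versus gained in passing from $x$ to $x^{p_k}$, and the gained ones are exhibited via a central element $u\in Z(P_k)\setminus\langle x\rangle$ of order $p_k$: the element $ux$ has the same order as $x$ but generates a different cyclic subgroup, and every element whose $\tau_x$-part generates $\langle ux\rangle$ is adjacent to $x^{p_k}$ but not to $x$. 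With this lemma one shows that among the minimum-degree vertices there is a $z$ with $\langle z_j\rangle\in\mathcal{M}(P_j)$ for every $j\in J$ (first enlarge $\tau\cap J$, then enlarge the order of the $J$-part, each step preserving minimality of degree). For (ii) this immediately gives $\langle z\rangle\in\mathcal{M}(G)$, hence $\delta(\mathcal{P}(G))=o(z)-1$ and $\langle z\rangle$ has minimum order; for (i) one then applies Theorem \ref{main-2} to $z$, observes that the resulting index $k$ cannot lie in $J$ (if $\lambda_k=0$ then Proposition \ref{propcomp}(i) gives a contradiction, and $\lambda_k\geq 1$ forces $\langle z_k\rangle\notin\mathcal{M}(P_k)$), and finishes with a separate comparison showing $M$ has minimum order. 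Your sketch contains none of this: for (i) you assert that the noncyclic center ``inflates'' the count $|\{w:x\in\langle w\rangle\}|$ enough to push the minimizing index outside $J$, but that count is structure-dependent in a noncyclic $p$-group and you give no lower bound for it, nor do you address why the resulting $M$ has minimum order.

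The failure is concrete in case (ii). For $T=\emptyset$ you call the inequality $\deg(x)\geq|M_{\min}|-1$ ``immediate from $o(x_j)\geq m_j$,'' but the only explicit lower bound available (Proposition \ref{degprime}) is $\deg(x)\geq o(x)-\phi(o(x))+\prod_{i}\bigl(p_i^{\gamma_i}-p_i^{\beta_i-1}\bigr)-1$, and this can be strictly smaller than $|M_{\min}|-1$: in $G=(C_4\times C_4)\times(C_9\times C_9)$, which satisfies all hypotheses of part (ii), every maximal cyclic subgroup has order $36$, yet for $x$ of order $6$ the bound evaluates to $6-2+(4-1)(9-1)-1=27<35$. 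The true degree is of course at least $35$, but only because such an $x$ lies in many maximal cyclic subgroups, and bounding that multiplicity from below is exactly the work your sketch defers. Your closing proposal --- induction on the number of deficient components using the $p$-group base case and the $\phi$-inequalities --- does not supply the missing step either: what each stage of such an induction needs is precisely the comparison $\deg(x)\leq\deg\bigl(x^{p_k}\bigr)$, i.e.\ Proposition \ref{prop.abelian}, and that is the one statement whose proof genuinely uses a central element of order $p_k$ outside $\langle x\rangle$.
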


Note that if $p_1 \geq r+1$, then $2\phi\left(\underset{i\in [r]} \prod p_i\right)\geq \underset{i\in [r]} \prod p_i$ and hence Theorems \ref{main-2} and \ref{main-3} hold good. For $r\geq 3$, each of the conditions in Theorem \ref{main-3} involving the prime divisors of $n$ implies that $p_1\geq 3$. We prove the following theorem in Section \ref{sec-main-3-4} without any condition on the prime divisors of $n$, thus improving Theorem \ref{main-3}(ii) when $r=3$.

\begin{theorem}\label{main-4}
Suppose that $r=3$ and that $Z(P_j)$ is noncyclic for every $j\in [r]$. Then $\delta(\mathcal{P}(G)) = \deg(y)$, where $\langle y\rangle\in\mathcal{M}(G)$ is of minimum order.
\end{theorem}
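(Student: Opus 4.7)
The plan is to exploit the internal direct-product decomposition $G = P_1 \times P_2 \times P_3$ in order to factorize $\deg(x)$ along the Sylow factors, and then finish by a uniform lower bound.

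First, every $x \in G$ decomposes uniquely as $x = x_1 x_2 x_3$ with $x_i \in P_i$, and pairwise coprimality of the $o(x_i)$ yields (via CRT) $\langle x \rangle = \langle x_1 \rangle \times \langle x_2 \rangle \times \langle x_3 \rangle$; more generally every cyclic subgroup of $G$ is a product of cyclic subgroups of the $P_i$, whence
$$\mathcal{M}(G) \;=\; \bigl\{M_1 \times M_2 \times M_3 : M_i \in \mathcal{M}(P_i)\bigr\}.$$
Setting $m_i := \min\{|M_i| : M_i \in \mathcal{M}(P_i)\}$, a minimum-order member of $\mathcal{M}(G)$ therefore has order $m_1 m_2 m_3$, and a generator $y$ of such a member satisfies $\deg(y) = m_1 m_2 m_3 - 1$ (since $y$ lies in the unique maximal cyclic subgroup it generates).

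Since two vertices of $\mathcal{P}(G)$ are adjacent iff they lie in a common maximal cyclic subgroup, $\deg(x) = \bigl|\bigcup_{M \in \mathcal{M}_x} M\bigr| - 1$ where $\mathcal{M}_x := \{M \in \mathcal{M}(G) : x \in M\}$. The main step of the plan, and what I expect to be the only nonformal obstacle, is the factorization identity
$$\bigcup_{M \in \mathcal{M}_x} M \;=\; U_1(x_1) \times U_2(x_2) \times U_3(x_3), \qquad U_i(x_i) := \bigcup \bigl\{M_i \in \mathcal{M}(P_i) : x_i \in M_i\bigr\}.$$
This rests on the bijection $\mathcal{M}_x \leftrightarrow \mathcal{M}_{x_1} \times \mathcal{M}_{x_2} \times \mathcal{M}_{x_3}$ sending $M_1 \times M_2 \times M_3$ to $(M_1, M_2, M_3)$, together with the freedom to choose the $M_i \in \mathcal{M}_{x_i}$ independently and then recombine them into a single maximal cyclic subgroup of $G$ containing $x$. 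Once granted, it gives $\deg(x) = |U_1(x_1)| \cdot |U_2(x_2)| \cdot |U_3(x_3)| - 1$.

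Finally, the trivial bound $|U_i(x_i)| \geq m_i$ holds for every $x_i$: since $P_i$ is finite, $x_i$ lies in at least one $M_i \in \mathcal{M}(P_i)$, and then $M_i \subseteq U_i(x_i)$ forces $|U_i(x_i)| \geq |M_i| \geq m_i$. Multiplying yields $\deg(x) \geq m_1 m_2 m_3 - 1 = \deg(y)$ for every $x \in G$, and hence $\delta(\mathcal{P}(G)) = \deg(y)$. I note that this route does not explicitly invoke either $r=3$ or the noncyclic-center hypothesis; these assumptions are presumably natural from the perspective of Theorem \ref{main-2}, which pins down the minimizers structurally, rather than being strictly needed for the present numerical conclusion.
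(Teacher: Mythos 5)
There is a genuine gap, and it is fatal: your adjacency characterization is wrong. In $\mathcal{P}(G)$, two vertices $x,z$ are adjacent iff $x\in\langle z\rangle$ or $z\in\langle x\rangle$; this is strictly stronger than ``$x$ and $z$ lie in a common (maximal) cyclic subgroup.'' The graph whose edges are given by co-membership in a cyclic subgroup is the \emph{enhanced} power graph, a different object. Consequently your formula $\deg(x)=\bigl|\bigcup_{M\in\mathcal{M}_x}M\bigr|-1$ fails: in $C_6$ an element $x$ of order $2$ lies only in the maximal cyclic subgroup $C_6$, so your formula would give degree $5$, but $x$ is adjacent only to $e$ and to the two generators of $C_6$ (the element of order $3$ is neither a power of $x$ nor has $x$ as a power), so $\deg(x)=3$. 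The correct count, which the paper records as Proposition \ref{degprime}, is $\deg(x)\geq o(x)-\phi(o(x))+\bigl(\prod_{j\notin\tau_x}p_j^{\alpha_j}\bigr)\bigl(\prod_{i\in\tau_x}(p_i^{\gamma_i}-p_i^{\beta_i-1})\bigr)-1$ and does not factor as a product over the Sylow components in the way you need.

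Your own closing remark is the tell-tale sign: a proof of this theorem that uses neither $r=3$ nor the noncyclic-center hypothesis must be wrong, because the uniform bound $\deg(x)\geq m_1m_2m_3-1$ you derive is false for power graphs in general. For instance, for $G=C_2\times C_2\times C_3$ (so $r=2$, $P_2$ cyclic) every maximal cyclic subgroup has order $6$, yet an involution $x\in P_1$ has $\deg(x)=3<5$; and indeed the paper's cited result \cite[Proposition 2.7]{PPS-new} shows that when some Sylow subgroup is cyclic the minimum degree is attained elsewhere, not at a generator of a minimum-order maximal cyclic subgroup. The paper's actual proof is structural: Proposition \ref{prop.abelian} uses an element $u\in Z(P_k)\setminus\langle x\rangle$ of order $p_k$ (this is exactly where the noncyclic-center hypothesis enters, and where $r=3$ substitutes for a totient inequality) to show $\deg(x)\leq\deg(x^{p_k})$, and then a two-step extremal argument produces a minimum-degree vertex that generates a maximal cyclic subgroup, which is then forced to be of minimum order since generators of maximal cyclic subgroups have degree $o(\cdot)-1$. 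That last fact is the only piece of your argument that survives.
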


Now consider $r=2$. If both $Z(P_1)$ and $Z(P_2)$ are noncyclic, then $\delta(\mathcal{P}(G))$ is determined by Theorem \ref{main-3}(ii). If $P_1$ is noncyclic and $P_2$ is cyclic, then $\delta(\mathcal{P}(G))$ is determined by \cite[Proposition 2.7]{PPS-new}. We prove the following theorem in Section \ref{sec-main-5}.

\begin{theorem}\label{main-5}
Suppose that $r=2$ and that $P_1$ is cyclic and $Z(P_2)$ is noncyclic. Then $\delta(\mathcal{P}(G)) = \deg(y_2)$, where $\langle y_2\rangle\in\mathcal{M}(P_2)$ is of minimum order.
\end{theorem}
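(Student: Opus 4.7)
The plan is to exploit the product decomposition $G = P_1 \times P_2$ with $P_1$ cyclic to reduce the degree computation in $\mathcal{P}(G)$ to one in $\mathcal{P}(P_2)$, and then invoke the known result for $p$-groups recalled in the Introduction. First, I would establish that $\mathcal{M}(G) = \{P_1 \times M_2 : M_2 \in \mathcal{M}(P_2)\}$. For any cyclic subgroup $C$ of $G$, the coprimality of the Sylow orders yields $C = (C \cap P_1) \times (C \cap P_2)$; if $C$ is maximal cyclic in $G$, then $C \cap P_1 = P_1$ (otherwise $P_1 \times (C \cap P_2)$ would be a strictly larger cyclic subgroup, using that $P_1$ is cyclic) and $C \cap P_2 \in \mathcal{M}(P_2)$ by the same argument; conversely, each $P_1 \times M_2$ with $M_2 \in \mathcal{M}(P_2)$ is cyclic and cannot be properly enlarged.

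For any $x \in G$ written as $x = x_1 x_2$ with $x_i \in P_i$, the closed neighbourhood of $x$ in $\mathcal{P}(G)$ equals $\bigcup_{M \in \mathcal{M}(G),\, x \in M} M$, since any two elements of a common cyclic subgroup are adjacent. By the previous step this union is $P_1 \times \bigl(\bigcup_{M_2 \in \mathcal{M}(P_2),\, x_2 \in M_2} M_2\bigr)$, and the bracketed set is exactly the closed neighbourhood of $x_2$ in $\mathcal{P}(P_2)$. Therefore
\[
\deg_{\mathcal{P}(G)}(x) \;=\; p_1^{\alpha_1}\bigl(\deg_{\mathcal{P}(P_2)}(x_2)+1\bigr) - 1,
\]
which depends only on $x_2$, so $\delta(\mathcal{P}(G)) = p_1^{\alpha_1}\bigl(\delta(\mathcal{P}(P_2))+1\bigr) - 1$.

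Since $P_2$ is a $p_2$-group, the known result for $p$-groups gives $\delta(\mathcal{P}(P_2)) = o(y_2) - 1$, and substitution yields $\delta(\mathcal{P}(G)) = p_1^{\alpha_1} o(y_2) - 1$. On the other hand, because $\langle y_2 \rangle$ is by its own maximality the only member of $\mathcal{M}(P_2)$ containing $y_2$, the unique element of $\mathcal{M}(G)$ containing $y_2$ is $P_1 \times \langle y_2 \rangle$, and hence $\deg_{\mathcal{P}(G)}(y_2) = p_1^{\alpha_1} o(y_2) - 1$, matching the minimum. The only delicate point is the description of $\mathcal{M}(G)$; everything else is a direct computation. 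I note that the hypothesis that $Z(P_2)$ is noncyclic does not explicitly enter this plan---only the noncyclicity of $P_2$ (forced by $G$ being noncyclic with $P_1$ cyclic) is used---so I would compare with the authors' argument to see whether the stronger hypothesis is actually required or is included for uniformity with Theorems \ref{main-3} and \ref{main-4}.
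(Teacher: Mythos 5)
There is a genuine gap at the central step of your plan. You assert that the closed neighbourhood of $x$ in $\mathcal{P}(G)$ equals $\bigcup\{M : M\in\mathcal{M}(G),\ x\in M\}$ ``since any two elements of a common cyclic subgroup are adjacent.'' That justification is false: two elements of a cyclic group are adjacent in the power graph only if one of them generates a subgroup containing the other, and $\mathcal{P}(C_m)$ is complete only when $m$ is a prime power (as the paper recalls from \cite[Theorem 2.12]{CGS-2009}). Here every $M\in\mathcal{M}(G)$ has order divisible by both $p_1$ and $p_2$, so $\mathcal{P}(M)$ is never complete. Concretely, take $G=C_2\times(C_3\times C_3)$, so $P_1=C_2=\langle t\rangle$ and $P_2=C_3\times C_3$, and let $x=x_2$ be any element of order $3$. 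The union of the maximal cyclic subgroups of $G$ containing $x_2$ is $P_1\times\langle x_2\rangle$, of size $6$, but $t$ is not adjacent to $x_2$ (neither lies in the cyclic subgroup generated by the other), so $\deg(x_2)=4$ while your formula predicts $2\cdot(2+1)-1=5$. The same error invalidates your claim $\deg_{\mathcal{P}(G)}(y_2)=p_1^{\alpha_1}o(y_2)-1$; the correct value, from Proposition \ref{degprime}, is $o(y_2)/p_2+p_1^{\alpha_1}\phi(o(y_2))-1$, which equals $4$ in the example. Thus both sides of the identity you are trying to verify are computed incorrectly, and the clean reduction of $\delta(\mathcal{P}(G))$ to $\delta(\mathcal{P}(P_2))$ does not go through.

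Your description of $\mathcal{M}(G)$ as $\{P_1\times M_2: M_2\in\mathcal{M}(P_2)\}$ is correct, but it does not by itself control the degrees, because for $x$ with $\tau_x=\{1,2\}$ the degree depends not only on which maximal cyclic subgroups contain $x$ but also on the position of $x$ inside them. This is exactly the difficulty the paper's proof addresses: for such $x$ it compares $\deg(x)$ with $\deg(z_2)$, where $\langle z\rangle\in\mathcal{M}(G)$ contains $x$, via a descent along $p_2$-th powers, $\deg(x_1x_{2,k-1})\geq\deg(x_1x_{2,k})$, which relies on Proposition \ref{prop.abelian}; that is precisely where the hypothesis that $Z(P_2)$ is noncyclic enters (one needs a central element of order $p_2$ outside $\langle x\rangle$ to manufacture enough neighbours of $x^{p_2}$ that are not neighbours of $x$). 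Your closing observation that the noncyclicity of $Z(P_2)$ is never used in your argument should therefore have been read as a warning sign rather than a curiosity.
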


If a Sylow subgroup of a finite abelian group is noncyclic, then it automatically implies that the center of that Sylow subgroup is noncyclic. Since every abelian group is nilpotent, we have the following corollaries for $r\in\{2,3\}$.

\begin{corollary}\label{cor-abelian-1}
Let $G$ be a noncyclic abelian group of order $n$ with $r=2$. Then the following hold:
\begin{enumerate}[\rm(i)]
\item If both $P_1$ and $P_2$ are noncyclic, then $\delta(\mathcal{P}(G)) = \deg(y)$, where $\langle y\rangle\in\mathcal{M}(G)$ is of minimum order.
\item If $P_1$ is noncyclic and $P_2$ is cyclic, then $\delta(\mathcal{P}(G)) = \deg(y_1)$, where $\langle y_1\rangle\in\mathcal{M}(P_1)$ is of minimum order.
\item If $P_1$ is cyclic and $P_2$ is noncyclic, then $\delta(\mathcal{P}(G)) = \deg(y_2)$,  where $\langle y_2\rangle\in\mathcal{M}(P_2)$ is of minimum order.
\end{enumerate}
\end{corollary}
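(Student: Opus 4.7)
The plan is to deduce each of the three parts directly from the theorems already proved, using the fact that in an abelian group every Sylow subgroup is itself abelian and therefore equals its own center. In particular, for $G$ abelian the condition ``$Z(P_j)$ is noncyclic'' is equivalent to ``$P_j$ is noncyclic'', which is precisely the setting of Theorems \ref{main-3}(ii), \ref{main-5}, and of \cite[Proposition 2.7]{PPS-new}. So the content of the corollary is really a bookkeeping exercise: checking that the Euler-totient hypotheses simplify appropriately in the case $r=2$, and then invoking the relevant result.

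For part (i), both $P_1$ and $P_2$ are noncyclic, hence $J=\{1,2\}=[r]$ in the notation of Theorem \ref{main-3}. To apply Theorem \ref{main-3}(ii), I must verify that $2\phi\left(\prod_{i\in[r]\setminus\{l\}} p_i\right) \geq \prod_{i\in[r]\setminus\{l\}} p_i$ for every $l\in[r]$. With $r=2$ the set $[r]\setminus\{l\}$ is a singleton $\{m\}$, so the inequality reduces to $2\phi(p_m)\geq p_m$, i.e.\ $2(p_m-1)\geq p_m$, which holds for any prime $p_m\geq 2$. Thus Theorem \ref{main-3}(ii) applies and yields the stated formula $\delta(\mathcal{P}(G))=\deg(y)$ for $\langle y\rangle\in\mathcal{M}(G)$ of minimum order.

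For part (ii), the hypothesis ($P_1$ noncyclic, $P_2$ cyclic) is precisely that of \cite[Proposition 2.7]{PPS-new}, which was quoted verbatim in the Known results subsection and gives $\delta(\mathcal{P}(G))=\deg(y_1)$ where $\langle y_1\rangle\in\mathcal{M}(P_1)$ is of minimum order. No further argument is needed. For part (iii), the hypotheses of Theorem \ref{main-5} ($P_1$ cyclic and $Z(P_2)=P_2$ noncyclic, using abelianness) are satisfied, and that theorem immediately supplies $\delta(\mathcal{P}(G))=\deg(y_2)$ with $\langle y_2\rangle\in\mathcal{M}(P_2)$ of minimum order.

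There is no real obstacle in this proof; the only point worth double-checking is the reduction of the totient inequality in part (i) to a trivially true statement about a single prime. Every other assertion is an immediate consequence of a theorem already established in the paper, together with the observation that abelian groups equal their own centers.
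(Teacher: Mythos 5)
Your proposal is correct and matches the paper's own derivation: the paper likewise observes that for abelian $G$ each Sylow subgroup equals its own center, so (i) follows from Theorem \ref{main-3}(ii) (whose totient hypothesis reduces to $2\phi(p_m)\geq p_m$ when $r=2$), (ii) from \cite[Proposition 2.7]{PPS-new}, and (iii) from Theorem \ref{main-5}. Nothing further is needed.
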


\begin{corollary}\label{cor-abelian-2}
Let $G$ be an abelian group of order $n$ with $r=3$ in which every Sylow subgroup is noncyclic. Then $\delta(\mathcal{P}(G)) = \deg(y)$, where $\langle y\rangle\in\mathcal{M}(G)$ is of minimum order.
\end{corollary}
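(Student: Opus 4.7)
The plan is to derive this statement as an immediate consequence of Theorem \ref{main-4}, which already covers the more general nilpotent setting with $r=3$ under the hypothesis that the center of every noncyclic Sylow subgroup is noncyclic. The work therefore reduces to verifying that an abelian group with all three Sylow subgroups noncyclic satisfies those hypotheses. First I would observe that every abelian group is nilpotent, so $G$ is a nilpotent group of order $n$ with $r=3$, and that $G$ is noncyclic because any finite abelian group is cyclic precisely when each of its Sylow subgroups is cyclic, whereas here every Sylow subgroup is assumed noncyclic. Next, since $G$ is abelian, so is each Sylow subgroup $P_j$, whence $Z(P_j)=P_j$ for every $j\in[r]$; the assumption that each $P_j$ is noncyclic thus translates verbatim into $Z(P_j)$ being noncyclic for every $j\in[r]$, which is exactly the remaining hypothesis of Theorem \ref{main-4}.

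With all hypotheses of Theorem \ref{main-4} confirmed, invoking it yields $\delta(\mathcal{P}(G))=\deg(y)$ where $\langle y\rangle\in\mathcal{M}(G)$ is of minimum order, which is the desired conclusion. There is no genuine obstacle: the only point worth flagging is the equivalence between \emph{noncyclic Sylow} and \emph{noncyclic center of Sylow} in the abelian setting, which the authors have already noted just before Corollary \ref{cor-abelian-1}. Note also that no condition on the prime divisors of $n$ enters the argument, reflecting the fact that Theorem \ref{main-4} itself imposes none for $r=3$; this is why the corollary, unlike the analogous consequences one could state from Theorem \ref{main-3}(ii) for $r\ge 4$, is unconditional on $p_1,p_2,p_3$.
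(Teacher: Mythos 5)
Your proposal is correct and matches the paper's own derivation: the authors obtain Corollary \ref{cor-abelian-2} directly from Theorem \ref{main-4} by noting that every abelian group is nilpotent and that a noncyclic Sylow subgroup of an abelian group coincides with its own center, which is therefore noncyclic. Nothing further is needed.
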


\section{Comparison of vertex degrees}

Throughout this section, $G$ is a nilpotent group of order $n$ with $r\geq 2$. Then $G=P_1P_2\cdots P_r$, an internal direct product of the Sylow subgroups $P_1,P_2,\ldots, P_r$ of $G$. For a given $x \in G$, there exists a unique element in $P_i$, denoted by $x_i$, for $i\in [r]$ such that
$x = x_1x_2 \cdots x_r$.
Then $\langle x\rangle=\langle x_1\rangle \langle x_2\rangle\cdots \langle x_r\rangle$. Further, $\langle x\rangle\in\mathcal{M}(G)$ if and only if $\langle x_i\rangle\in\mathcal{M}(P_i)$ for every $i\in [r]$, see \cite[Lemma 2.11]{CPS-2}. Therefore, $\langle x\rangle\in\mathcal{M}(G)$ is of minimum order if and only if $\langle x_i\rangle\in\mathcal{M}(P_i)$ is of minimum order for every $i\in [r]$. For a given $M\in\mathcal{M}(G)$, we have $M_i:=M\cap P_i\in \mathcal{M}(P_i)$ for $i\in [r]$ and $M =  M_1 M_2 \cdots M_r$.

For $x\in G$, let $\tau_x$ denote the subset of $[r]$ consisting of those elements $j\in [r]$ for which the $j$-th component $x_j$ of $x$ is nonidentity, that is,
$\tau_x:=\{j\in [r]:\;x_j\neq e\}$.
Note that $\tau_x=[r]$ if $\langle x\rangle\in\mathcal{M}(G)$ and that $\tau_x$ is empty if and only if $x=e$. We have $\deg(e)=|G|-1=n-1\geq\deg(x)$ for every $x\in G\setminus\{e\}$.

The following two results were proved in \cite[Propositions 2.3, 2.4]{PPS-new}. The first one gives a lower bound for the degrees of vertices of $\mathcal{P}(G)$ and provides necessary and sufficient conditions to attain that bound. The second one determines certain vertices of minimum degree among the vertices contained in a Sylow subgroup of $G$.

\begin{proposition}[\cite{PPS-new}]\label{degprime}
Let $x \in G\setminus\{e\}$ and $M\in\mathcal{M}(G)$ containing $x$. If $|M|=p_{1}^{\gamma_{1}}p_{2}^{\gamma_{2}} \cdots p_{r}^{\gamma_{r}}$ and $o(x)=\underset{i\in \tau_x}\prod p_i^{\beta_i}$, where $1\leq \gamma_j\leq \alpha_j$ for $j\in [r]$ and $1\leq \beta_i\leq \gamma_i$ for $i\in \tau_x$, then
\begin{equation}\label{eqthm}
\deg(x) \geq  o(x) - \phi(o(x))
+ \left( \underset{j\in[r]\setminus \tau_x} \prod p_j^{\alpha_j} \right) \left(\underset{i\in \tau_x}\prod \left(p_{i}^{\gamma_{i}} - p_{i}^{\beta_i-1}\right)\right)  - 1.
\end{equation}
Further, equality holds in (\ref{eqthm}) if and only if $x$ belongs to exactly one maximal cyclic subgroup of $\underset{i\in \tau_x}\prod P_{i}$, namely $\underset{i\in \tau_x}\prod M_{i}$.
\end{proposition}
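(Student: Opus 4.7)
My approach will be a direct counting argument on the closed neighborhood of $x$ in $\mathcal{P}(G)$, exploiting the direct product decomposition $G=P_1\cdots P_r$. Two vertices $y\neq x$ are adjacent iff $y\in\langle x\rangle$ or $\langle x\rangle\subseteq\langle y\rangle$, so setting $U:=\{y\in G:\langle x\rangle\subseteq\langle y\rangle\}$, the closed neighborhood is $N[x]=\langle x\rangle\cup U$. The intersection $U\cap\langle x\rangle$ consists exactly of the $\phi(o(x))$ generators of $\langle x\rangle$, hence
$$\deg(x)=|\langle x\rangle|+|U|-|U\cap\langle x\rangle|-1=o(x)+|U|-\phi(o(x))-1.$$
The proposition will therefore reduce to the lower bound
$$|U|\geq \Bigl(\prod_{j\in[r]\setminus\tau_x}p_j^{\alpha_j}\Bigr)\prod_{i\in\tau_x}\bigl(p_i^{\gamma_i}-p_i^{\beta_i-1}\bigr),$$
together with the announced characterization of equality.

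Using the componentwise decomposition $y=y_1\cdots y_r$, the condition $\langle x\rangle\subseteq\langle y\rangle$ is equivalent to $\langle x_i\rangle\subseteq\langle y_i\rangle$ in $P_i$ for every $i\in[r]$, so I can factor $|U|=\prod_{i\in[r]}|U_i|$ with $U_i:=\{y_i\in P_i:\langle x_i\rangle\subseteq\langle y_i\rangle\}$. For $i\notin\tau_x$ the condition is vacuous and $|U_i|=p_i^{\alpha_i}$. For $i\in\tau_x$, I restrict attention first to $M_i$: since $M_i$ is a cyclic $p_i$-group of order $p_i^{\gamma_i}$ with $\langle x_i\rangle$ as its unique subgroup of order $p_i^{\beta_i}$, the elements $y_i\in M_i$ with $\langle x_i\rangle\subseteq\langle y_i\rangle$ are exactly those of order $p_i^k$ for $k\in\{\beta_i,\ldots,\gamma_i\}$, and the standard identity $\sum_{k=0}^{m}\phi(p^k)=p^m$ gives
$$|U_i\cap M_i|=\sum_{k=\beta_i}^{\gamma_i}\phi(p_i^k)=p_i^{\gamma_i}-p_i^{\beta_i-1}.$$
Since $U_i\supseteq U_i\cap M_i$, multiplying the factors yields the inequality.

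For the equality case, equality in $|U_i|\geq|U_i\cap M_i|$ forces $U_i\subseteq M_i$, that is, every cyclic subgroup of $P_i$ containing $x_i$ lies inside $M_i$. As each such cyclic subgroup extends to a maximal cyclic subgroup of $P_i$, this is equivalent to $M_i$ being the unique element of $\mathcal{M}(P_i)$ containing $x_i$. For the coordinates $j\notin\tau_x$ the bound $|U_j|=p_j^{\alpha_j}$ holds with equality automatically, so overall equality in \eqref{eqthm} is equivalent to the pointwise uniqueness condition for every $i\in\tau_x$. Invoking the product characterization of $\mathcal{M}(\prod_{i\in\tau_x}P_i)$ as the set of products of maximal cyclic subgroups of the $P_i$ (the analogue, recalled at the start of Section~2, of \cite[Lemma 2.11]{CPS-2}), this pointwise uniqueness is exactly the statement that $\prod_{i\in\tau_x}M_i$ is the unique maximal cyclic subgroup of $\prod_{i\in\tau_x}P_i$ containing $x$. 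I expect the main bookkeeping point to be precisely this last translation from componentwise uniqueness to global uniqueness inside $\prod_{i\in\tau_x}P_i$ (and understanding why the coordinates outside $\tau_x$ play no role); everything else is routine counting in cyclic $p$-groups.
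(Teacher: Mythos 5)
Your argument is correct and complete: the reduction $\deg(x)=o(x)+|U|-\phi(o(x))-1$ with $U=\{y:\langle x\rangle\subseteq\langle y\rangle\}$, the factorization $|U|=\prod_i|U_i|$ via the Sylow decomposition, the count $|U_i\cap M_i|=p_i^{\gamma_i}-p_i^{\beta_i-1}$ in the cyclic $p_i$-group $M_i$, and the translation of termwise equality $U_i\subseteq M_i$ into uniqueness of the maximal cyclic subgroup of $\prod_{i\in\tau_x}P_i$ containing $x$ all check out. Note that this paper does not actually prove Proposition~\ref{degprime}; it is imported verbatim from \cite[Proposition 2.3]{PPS-new}, so there is no in-paper proof to compare against, but your closed-neighborhood counting is the natural (and almost certainly the intended) route, and it is self-contained modulo the product characterization of $\mathcal{M}(\cdot)$ from \cite[Lemma 2.11]{CPS-2} that you correctly invoke.
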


\begin{proposition}[\cite{PPS-new}]\label{mindeg_in_pi}
For $k\in [r]$, suppose that $\langle y_k \rangle \in \mathcal{M}(P_k)$ is of minimum order. Then for any $x_k\in P_k$, $\deg(y_k)\leq \deg(x_k)$ in $\mathcal{P}(G)$.
\end{proposition}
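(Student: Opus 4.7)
The plan is to express $\deg(x_k)$ for an arbitrary $x_k \in P_k$ via Proposition~\ref{degprime} and then compare the resulting quantity with $\deg(y_k)$. If $x_k = e$, then $\deg(x_k) = n-1 \geq \deg(y_k)$ trivially, so I would assume $x_k \neq e$, in which case $\tau_{x_k} = \{k\}$. I would then pick any maximal cyclic subgroup $\langle w \rangle \in \mathcal{M}(P_k)$ containing $x_k$ and write $o(x_k) = p_k^{\beta}$, $|\langle w \rangle| = p_k^{\gamma}$, and $|\langle y_k \rangle| = p_k^{t}$; by the minimality of $|\langle y_k \rangle|$ and the containment $\langle x_k \rangle \subseteq \langle w \rangle$, we have $t \leq \gamma$ and $1 \leq \beta \leq \gamma$.

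The crucial observation is that $y_k$ belongs to a \emph{unique} maximal cyclic subgroup of $P_k$: any $\langle z \rangle \in \mathcal{M}(P_k)$ containing $y_k$ must also contain $\langle y_k \rangle$, and since $\langle y_k \rangle$ itself lies in $\mathcal{M}(P_k)$, this forces $\langle z \rangle = \langle y_k \rangle$. Hence Proposition~\ref{degprime} applies with equality at $y_k$ (recalling $\tau_{y_k}=\{k\}$ and $p_k^{t}-\phi(p_k^{t})=p_k^{t-1}$), yielding
\[
\deg(y_k) \;=\; p_k^{t-1} + \frac{n}{p_k^{\alpha_k}}\bigl(p_k^{t} - p_k^{t-1}\bigr) - 1.
\]
For the general $x_k$, applying Proposition~\ref{degprime} merely as a lower bound gives
\[
\deg(x_k) \;\geq\; p_k^{\beta-1} + \frac{n}{p_k^{\alpha_k}}\bigl(p_k^{\gamma} - p_k^{\beta-1}\bigr) - 1.
\]

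Subtracting and setting $m := n/p_k^{\alpha_k}$ (which satisfies $m \geq 2$ because $r \geq 2$), it suffices to establish
\[
m\bigl(p_k^{\gamma} - p_k^{t}\bigr) \;\geq\; (m-1)\bigl(p_k^{\beta-1} - p_k^{t-1}\bigr).
\]
If $\beta \leq t$, the right-hand side is non-positive while the left-hand side is non-negative (since $\gamma \geq t$), so the inequality holds. If $\beta > t$, then $\gamma \geq \beta > t$ and
\[
p_k^{\gamma} - p_k^{t} \;\geq\; p_k^{\beta} - p_k^{t} \;=\; p_k\bigl(p_k^{\beta-1} - p_k^{t-1}\bigr),
\]
so the left-hand side is at least $p_k m\bigl(p_k^{\beta-1} - p_k^{t-1}\bigr)$, which dominates $(m-1)\bigl(p_k^{\beta-1} - p_k^{t-1}\bigr)$ because $p_k m \geq m - 1$. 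I expect this final algebraic comparison to be the only genuine calculation; the real structural content is the recognition that Proposition~\ref{degprime} delivers an \emph{exact} value at $y_k$ thanks to the maximality of $\langle y_k \rangle$, while a lower bound at $x_k$ is enough to conclude.
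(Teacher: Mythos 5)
Your proof is correct. Note that the paper does not actually prove this proposition itself --- it imports it from \cite{PPS-new} --- so there is no in-paper argument to compare against; your derivation from Proposition~\ref{degprime} (exact value at $y_k$ because a generator of a maximal cyclic subgroup of $P_k$ lies in a unique maximal cyclic subgroup of $P_k$, lower bound at $x_k$, then the elementary comparison $m\bigl(p_k^{\gamma}-p_k^{t}\bigr)\geq (m-1)\bigl(p_k^{\beta-1}-p_k^{t-1}\bigr)$ with $m=n/p_k^{\alpha_k}\geq 2$) is exactly the kind of argument the surrounding machinery supports, and both cases of your final inequality check out.
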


As an application of Proposition \ref{degprime}, we have the following.

\begin{corollary}\label{rem2}
Let $x,y \in G$ with $\tau_x=\tau_y$. If $\langle x \rangle$ and $\langle y \rangle$ are maximal cyclic subgroups  of $\underset{i\in \tau_x}\prod P_{i}$ of equal order, then $\deg(x) = \deg(y)$  in $\mathcal{P}(G)$.
\end{corollary}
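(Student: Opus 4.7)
The plan is to apply Proposition \ref{degprime} to both $x$ and $y$, verify that we are in its equality case, and observe that the resulting formula depends only on $\tau_x$ and $o(x)$.

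First, I would unpack the hypothesis. Since $\langle x\rangle$ is a maximal cyclic subgroup of $\prod_{i\in\tau_x}P_i$ and $\langle x\rangle=\prod_{i\in\tau_x}\langle x_i\rangle$, each $\langle x_i\rangle$ must be a maximal cyclic subgroup of $P_i$ for $i\in\tau_x$. Writing $o(x)=\prod_{i\in\tau_x}p_i^{\beta_i}$, this means $|\langle x_i\rangle|=p_i^{\beta_i}$. Now let $M\in\mathcal{M}(G)$ contain $x$ and write $M_i=M\cap P_i\in\mathcal{M}(P_i)$. For $i\in\tau_x$, the inclusion $\langle x_i\rangle\subseteq M_i$ together with maximality in $P_i$ forces $M_i=\langle x_i\rangle$, hence $\gamma_i=\beta_i$ in the notation of Proposition \ref{degprime}. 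The same argument shows that the unique maximal cyclic subgroup of $\prod_{i\in\tau_x}P_i$ containing $x$ is $\prod_{i\in\tau_x}M_i=\langle x\rangle$ itself, so the equality criterion in Proposition \ref{degprime} is met.

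Substituting $\gamma_i=\beta_i$ in \eqref{eqthm} and using $p_i^{\beta_i}-p_i^{\beta_i-1}=\phi(p_i^{\beta_i})$ together with multiplicativity of $\phi$, the product $\prod_{i\in\tau_x}(p_i^{\gamma_i}-p_i^{\beta_i-1})$ collapses to $\phi(o(x))$. Thus
\begin{equation*}
\deg(x) \;=\; o(x)-\phi(o(x)) + \Biggl(\,\prod_{j\in[r]\setminus\tau_x}p_j^{\alpha_j}\Biggr)\phi(o(x))-1,
\end{equation*}
an expression that depends solely on $\tau_x$ and $o(x)$.

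Now applying the same analysis to $y$: the hypothesis $\tau_y=\tau_x$ matches up the index sets, and $|\langle x\rangle|=|\langle y\rangle|$ combined with $o(x)=|\langle x\rangle|$ and $o(y)=|\langle y\rangle|$ gives $o(x)=o(y)$. Plugging these equalities into the formula for $\deg(y)$ yields the same number as for $\deg(x)$, completing the proof. There is no serious obstacle here; the only point that requires care is correctly identifying the equality case of Proposition \ref{degprime}, which reduces to the maximality argument in the first paragraph.
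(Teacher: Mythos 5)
Your proposal is correct and follows exactly the route the paper intends: the corollary is stated as an application of Proposition \ref{degprime}, and the same computation (maximality of $\langle x\rangle$ in $\prod_{i\in\tau_x}P_i$ forces the equality case with $\gamma_i=\beta_i$, collapsing the product to $\phi(o(x))$ and leaving a formula depending only on $o(x)$ and $\tau_x$) appears verbatim in the paper's proof of Proposition \ref{propcomp}. Your identification of the equality case is sound --- indeed one can shortcut the component argument by noting that any maximal cyclic subgroup of $\prod_{i\in\tau_x}P_i$ containing $x$ contains the maximal cyclic subgroup $\langle x\rangle$ and hence equals it --- but this is a cosmetic difference only.
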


In the following proposition, we obtain a comparison between the degree of a vertex generating a maximal cyclic subgroup of $G$ and that of some related vertex. 

\begin{proposition}\label{propcomp}
Let $\langle y \rangle \in \mathcal{M}(G)$ and $I$ be a nonempty proper subset of $[r]$. For $x:=\underset{i\in I}\prod y_i$, the following hold:
\begin{enumerate}[\rm(i)]
\item If $P_j$ is cyclic for every $j\in [r]\setminus I$, then  $\deg(y) > \deg(x)$.
\item If $P_k$ is noncyclic for some $k\in [r]\setminus I$ and $2\phi\left(\underset{i\in I}\prod p_i\right) \geq \underset{i\in I}\prod p_i$, then  $\deg(y) < \deg(x)$.
\end{enumerate}
\end{proposition}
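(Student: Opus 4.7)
The plan is to compute $\deg(y)$ and $\deg(x)$ explicitly and compare. For $y$ we use $\deg(y) = o(y) - 1$, which holds because $\langle y\rangle \in \mathcal{M}(G)$. For $x$ we aim at the equality case of Proposition \ref{degprime}, so the first step is to check that $x$ is contained in a unique maximal cyclic subgroup of $\prod_{i\in I} P_i$. Since $\langle y\rangle \in \mathcal{M}(G)$ forces each $\langle y_i\rangle \in \mathcal{M}(P_i)$ and in particular $y_i \neq e$, we see that $\tau_x = I$, $o(x) = \prod_{i\in I} p_i^{\gamma_i}$ where $p_i^{\gamma_i} = |\langle y\rangle \cap P_i|$, and $\langle x\rangle = \prod_{i\in I}\langle y_i\rangle$. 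Any maximal cyclic subgroup $C$ of $\prod_{i\in I} P_i$ containing $x$ decomposes as $C = \prod_{i\in I}(C\cap P_i)$ with each $C\cap P_i$ maximal cyclic in $P_i$; the containment $\langle y_i\rangle \subseteq C\cap P_i$, combined with the maximality of $\langle y_i\rangle$, forces $C\cap P_i = \langle y_i\rangle$, whence $C = \langle x\rangle$ is unique.

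Applying Proposition \ref{degprime} with $\beta_i = \gamma_i$ and using $\prod_{i\in I}(p_i^{\gamma_i} - p_i^{\gamma_i-1}) = \phi(o(x))$ then yields the equality
\begin{equation*}
\deg(x) = o(x) - \phi(o(x)) + A\,\phi(o(x)) - 1, \qquad A := \prod_{j\in [r]\setminus I} p_j^{\alpha_j}.
\end{equation*}
Writing $B := \prod_{j\in [r]\setminus I} p_j^{\gamma_j}$, we also have $\deg(y) = o(y) - 1 = B\,o(x) - 1$, so subtraction gives
\begin{equation*}
\deg(y) - \deg(x) = (B-1)\,o(x) - (A-1)\,\phi(o(x)).
\end{equation*}

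For part (i), cyclicity of $P_j$ for every $j\in [r]\setminus I$ gives $\gamma_j = \alpha_j$ and hence $A = B$, so the difference becomes $(A-1)\bigl(o(x) - \phi(o(x))\bigr)$, which is strictly positive because $[r]\setminus I \neq \emptyset$ forces $A \geq 2$ and $o(x) > 1$ forces $\phi(o(x)) < o(x)$. For part (ii), noncyclicity of $P_k$ with $k\in [r]\setminus I$ gives $\alpha_k > \gamma_k$, so $A/B \geq p_k \geq 2$, i.e.\ $A \geq 2B$; multiplicativity of $\phi$ converts the hypothesis $2\phi\bigl(\prod_{i\in I} p_i\bigr) \geq \prod_{i\in I} p_i$ into $\phi(o(x)) \geq o(x)/2$, so $A\,\phi(o(x)) \geq 2B\,\phi(o(x)) \geq B\,o(x)$. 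Rewriting
\begin{equation*}
\deg(y) - \deg(x) = -\bigl[A\,\phi(o(x)) - B\,o(x)\bigr] - \bigl[o(x) - \phi(o(x))\bigr]
\end{equation*}
then shows $\deg(y) - \deg(x) < 0$, since both bracketed quantities are nonnegative and the second is strictly positive. I expect the main technical step to be the uniqueness argument for the maximal cyclic subgroup containing $x$, as this is what unlocks the equality form of Proposition \ref{degprime}; the remaining inequalities are routine manipulations of prime-power products.
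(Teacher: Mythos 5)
Your proposal is correct and follows essentially the same route as the paper: compute $\deg(y)=o(y)-1$, obtain $\deg(x)$ from the equality case of Proposition \ref{degprime} (the uniqueness of the maximal cyclic subgroup of $\underset{i\in I}\prod P_i$ containing $x$ being immediate since $\langle x\rangle$ is itself such a subgroup), and compare the resulting expressions, with part (ii) resting on exactly the same two facts ($A\geq 2B$ up to a harmless $\pm1$, and $2\phi(o(x))\geq o(x)$ from the hypothesis). The only difference is cosmetic bookkeeping in how the final inequality is grouped.
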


\begin{proof}
We have $\tau_y=[r]$ and $\tau_x=I$. Put $o(y)= p_1^{\gamma_1}p_2^{\gamma_2} \cdots p_r^{\gamma_r}$, where $1\leq \gamma_i\leq \alpha_i$ for $i\in [r]$. Then $\deg(y)=o(y)-1=p_1^{\gamma_1}p_2^{\gamma_2} \cdots p_r^{\gamma_r} -1$ and $o(x)=\underset{i\in \tau_x}\prod p_i^{\gamma_i}$. Since $\langle x\rangle$ is a maximal cyclic subgroup of $\underset{i\in \tau_x}\prod P_{i}$, Proposition \ref{degprime} gives that
\begin{align*}
\deg(x)& =o(x)-\phi(o(x)) + \left( \underset{j\in[r]\setminus \tau_x} \prod p_j^{\alpha_j} \right) \left(\underset{i\in \tau_x}\prod \left(p_{i}^{\gamma_{i}} - p_{i}^{\gamma_i-1}\right)\right)  - 1\nonumber\\
 & = o(x)-\phi(o(x)) + \left( \underset{j\in[r]\setminus \tau_x} \prod p_j^{\alpha_j} \right) \phi(o(x))  - 1\nonumber \\
 & = o(x) + \phi(o(x))\left(\underset{j\in[r]\setminus \tau_x} \prod p_j^{\alpha_j} -1 \right) -1.
\end{align*}
Therefore,
\begin{align}\label{eqn-2}
\deg(x)-\deg(y)& =\phi(o(x))\left(\underset{j\in[r]\setminus \tau_x} \prod p_j^{\alpha_j} -1 \right) - o(x) \left( \underset{j\in [r]\setminus \tau_x} \prod p_j^{\gamma_j} -1 \right).
\end{align}

\noindent
(i) Since $P_j$ is cyclic for every $j\in [r]\setminus \tau_x$, we have $\gamma_j = \alpha_j$ for $j\in [r]\setminus \tau_x$ and it follows from (\ref{eqn-2}) that $\deg(x) < \deg(y)$ as $\phi(o(x))<o(x)$.\medskip

\noindent	
(ii) Since $P_k$ is noncyclic for some $k\in [r]\setminus \tau_x$, we have $\alpha_{k} > \gamma_k$ and so
\begin{align}\label{eqn-3}
\left( \underset{j\in [r]\setminus \tau_x} \prod p_j^{\alpha_j} -1 \right) - 2 \left(\underset{j\in [r]\setminus \tau_x} \prod p_j^{\gamma_j} -1 \right) &=  \left( \underset{j\in [r]\setminus \tau_x} \prod p_j^{\alpha_j}\right)  - 2 \left( \underset{j\in [r]\setminus \tau_x} \prod p_j^{\gamma_j}\right) +1 \nonumber \\
& \geq \left( \underset{j\in [r]\setminus (\tau_x\cup\{k\})} \prod p_j^{\gamma_j}\right) \left( p_k^{\alpha_k} - 2p_k^{\gamma_k}  \right)+1 > 0.
\end{align}
Since $2\phi\left(\underset{i\in \tau_x}\prod p_i\right) \geq \underset{i\in \tau_x}\prod p_i$ by the given hypothesis, we have
\begin{equation}\label{eqn-4}
2\phi(o(x))-o(x)=\left(\underset{i\in \tau_x}\prod p_i^{\gamma_i -1}\right) \left[2\phi\left(\underset{i\in \tau_x}\prod p_i\right)-\underset{i\in \tau_x}\prod p_i\right] \geq 0.
\end{equation}
Using (\ref{eqn-2}), (\ref{eqn-3}) and (\ref{eqn-4}), it follows that $ 2[\deg(x) - \deg(y)] >0$ and hence $\deg(x) > \deg(y)$.
\end{proof}

Note that $2\phi(p_j) \geq p_j$ for every $j\in [r]$. So we have the following corollary as an application of Proposition \ref{propcomp}(ii).

\begin{corollary}
For any $\langle y \rangle \in \mathcal{M}(G)$, the following hold:
\begin{enumerate}[\rm(i)]
\item If $P_k$ is noncyclic for some $k\in [r]$, then $\deg(y) < \deg(y_j)$ for every $j \in [r]\setminus \{k\}$.
\item If $P_k$ and $P_l$ are noncyclic for distinct $k,l\in [r]$, then $\deg(y) < \deg(y_i)$ for every $i\in [r]$.
\end{enumerate}
\end{corollary}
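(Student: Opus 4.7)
My plan is to obtain both parts of the corollary as direct consequences of Proposition \ref{propcomp}(ii), in each case choosing the subset $I\subseteq [r]$ to be a singleton. When $|I|=1$, the arithmetic hypothesis $2\phi\bigl(\prod_{i\in I} p_i\bigr) \geq \prod_{i\in I} p_i$ collapses to $2\phi(p_j) \geq p_j$ for the single prime $p_j$ in $I$, and this inequality holds for every prime $p_j$ as recorded in the remark immediately preceding the corollary statement. So the only remaining task in each case is to exhibit a noncyclic Sylow subgroup indexed by some element of $[r]\setminus I$.

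For part (i), I would fix any $j \in [r]\setminus\{k\}$ and take $I = \{j\}$, which is a nonempty proper subset of $[r]$ (using $r \geq 2$, which is the standing assumption of the section). Then $x := \prod_{i\in I} y_i = y_j$, and the given index $k$ lies in $[r]\setminus I$ with $P_k$ noncyclic by hypothesis. Applying Proposition \ref{propcomp}(ii) immediately yields $\deg(y) < \deg(y_j)$.

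For part (ii), I would fix any $i \in [r]$ and set $I = \{i\}$, so that $x = y_i$. The key bookkeeping point is that since $k$ and $l$ are \emph{distinct} indices with both $P_k$ and $P_l$ noncyclic, at least one of $\{k,l\}$ necessarily lies in $[r]\setminus\{i\}$, and that index supplies the noncyclic Sylow factor required by Proposition \ref{propcomp}(ii). The proposition then delivers $\deg(y) < \deg(y_i)$.

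There is no real obstacle; the corollary is a clean bookkeeping instance of the preceding proposition. The only subtlety worth flagging is the contrast between the two parts: in part (i) the inequality must be restricted to $j \neq k$ because with only one known noncyclic Sylow subgroup $P_k$, the hypothesis of Proposition \ref{propcomp}(ii) fails when $I = \{k\}$ (there is no noncyclic Sylow left in the complement); in part (ii), having two distinct noncyclic Sylow subgroups is exactly what allows one to survive the removal of any single index $i$, which is why the inequality can be asserted for every $i\in [r]$.
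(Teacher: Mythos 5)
Your proposal is correct and matches the paper's intended argument exactly: the paper states this corollary as an immediate application of Proposition \ref{propcomp}(ii) with $I$ a singleton, using the observation that $2\phi(p_j)\geq p_j$ for every prime $p_j$. Your bookkeeping of which noncyclic Sylow subgroup survives in $[r]\setminus I$ in each part is precisely the point, and your remark explaining why part (i) must exclude $j=k$ while part (ii) need not exclude any index is accurate.
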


In the following proposition, under some conditions, we compare the degree of a vertex $x$ with that of some vertex $y$ such that $x\in\langle y\rangle$.

\begin{proposition}\label{proplessthan}
Let $x\in G$ be such that $|\tau_x|\geq 2$ and $x$ is contained in exactly one maximal cyclic subgroup of $\underset{i\in \tau_x}\prod P_{i}$. Suppose that, for some $k,l\in \tau_x$ with $k < l$, there exists $y\in G$ such that $y^{p_{k}}=x$ and $\langle y_{l} \rangle\notin \mathcal{M}(P_{l})$. If $2\phi\left(\underset{i\in \tau_x}\prod p_i\right) \geq \underset{i\in \tau_x}\prod p_i$, then $\deg(y) < \deg(x)$.
\end{proposition}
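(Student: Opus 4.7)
The plan is to apply Proposition \ref{degprime} with equality to both $x$ and $y$ and compare the resulting closed-form degree expressions.

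First I will unpack the structure of $y$ relative to $x$. From $y^{p_k}=x$ I get $y_i^{p_k}=x_i$ coordinatewise, so coprimality of $p_k$ with $|P_i|$ for $i\neq k$ yields $\langle y_i\rangle=\langle x_i\rangle$ and $o(y_i)=o(x_i)$, while $o(y_k)=p_k\,o(x_k)$. Consequently $\tau_y=\tau_x=:\tau$, $o(y)=p_k\,o(x)$, and $\phi(o(y))=p_k\phi(o(x))$. Next I will verify that the uniqueness hypothesis lifts from $x$ to $y$: that $x$ lies in a unique maximal cyclic subgroup of $\prod_{i\in\tau}P_i$ is equivalent to each $x_i$, $i\in\tau$, lying in a unique maximal cyclic subgroup $M_i^*$ of $P_i$. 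For $i\neq k$ this carries over to $y_i$ since $\langle y_i\rangle=\langle x_i\rangle$; for $i=k$, any maximal cyclic subgroup of $P_k$ containing $y_k$ also contains $y_k^{p_k}=x_k$ and must therefore equal $M_k^*$. Hence both $x$ and $y$ satisfy the equality case of Proposition \ref{degprime} with the same exponents $\gamma_i$ for $i\in\tau$ (and the same $\alpha_j$ for $j\notin\tau$).

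Setting $A=\prod_{j\in[r]\setminus\tau}p_j^{\alpha_j}$, $D=\prod_{i\in\tau\setminus\{k\}}(p_i^{\gamma_i}-p_i^{\beta_i-1})$ and $E=\prod_{i\in\tau\setminus\{k\}}p_i^{\beta_i}$, the two instances of (\ref{eqthm}) subtract cleanly to
\begin{equation*}
\deg(y)-\deg(x)=(p_k-1)\,p_k^{\beta_k-1}\bigl[p_kE-(p_k-1)\phi(E)-AD\bigr],
\end{equation*}
so it suffices to establish $p_kE-(p_k-1)\phi(E)<AD$. The hypothesis $\langle y_l\rangle\notin\mathcal{M}(P_l)$ gives $\beta_l<\gamma_l$, whence the $l$-th factor of $D$ is at least $p_l^{\beta_l+1}-p_l^{\beta_l-1}=(p_l+1)\phi(p_l^{\beta_l})$, while every other factor is at least $\phi(p_i^{\beta_i})$. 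Therefore $AD\geq D\geq(p_l+1)\phi(E)$, and the task shrinks to showing $E/\phi(E)<1+p_l/p_k$. Rewriting the arithmetic hypothesis $2\phi(\prod_{i\in\tau}p_i)\geq\prod_{i\in\tau}p_i$ and peeling off the factor at $i=k$ yields $E/\phi(E)=\prod_{i\in\tau\setminus\{k\}}p_i/(p_i-1)\leq 2-2/p_k$; finally $2-2/p_k<1+p_l/p_k$ is equivalent to $p_l>p_k-2$, which is immediate from $p_l>p_k$.

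The main obstacle, as anticipated, lies in the second step: transferring the uniqueness from $x$ to $y$ at the coordinate $k$, where $\langle y_k\rangle$ properly contains $\langle x_k\rangle$, and then recognizing that the hypothesis $\langle y_l\rangle\notin\mathcal{M}(P_l)$ contributes precisely the factor $p_l+1$ in $D$ needed to absorb the overhead $p_k$ introduced by $o(y)=p_k\,o(x)$. Without that extra $+1$, the estimate from the standing arithmetic hypothesis alone would not suffice.
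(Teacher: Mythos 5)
Your proposal is correct and follows essentially the same route as the paper: both transfer the uniqueness hypothesis from $x$ to $y$, apply Proposition \ref{degprime} with equality to each, subtract, and then exploit the factor $p_l^{\gamma_l}-p_l^{\beta_l-1}\geq (p_l+1)\phi\left(p_l^{\beta_l}\right)$ coming from $\langle y_l\rangle\notin\mathcal{M}(P_l)$, together with $p_k<p_l$ and the hypothesis $2\phi\left(\prod_{i\in\tau_x}p_i\right)\geq\prod_{i\in\tau_x}p_i$. The only difference is cosmetic bookkeeping: your reduction to $E/\phi(E)<1+p_l/p_k$, i.e.\ $p_l>p_k-2$, is the paper's inequality $p_l+1>\phi(p_k)$ in disguise.
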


\begin{proof}
For $i\in \tau_x$, let $M_i$ be the unique element of $\mathcal{M}(P_i)$ containing $x_i$. Since $y^{p_{k}}=x$, it follows that $\tau_y=\tau_x$ and that $\underset{i\in \tau_x}\prod M_{i}$ is the unique maximal cyclic subgroup of $\underset{i\in \tau_x}\prod P_{i}$ containing $y$.

Put $|M_i|=p_i^{\gamma_i}$ and $o(y) = \underset{i\in \tau_x}\prod p_{i}^{\beta_{i}}$, where $1\leq \beta_i\leq \gamma_i$ for $i\in \tau_x$. Since $y^{p_k}=x$ and $p_k$ divides $o(y)$, we have $o(x) =o(y)/p_k$. Then $x_k\neq e$ implies that $\beta_k\geq 2$. By Proposition \ref{degprime}, we have
\begin{equation*}
\deg(x) =  o(x) - \phi(o(x))
+ \left( \underset{j\in[r]\setminus \tau_x} \prod p_j^{\alpha_j} \right) \left(\underset{i\in \tau_x\setminus\{k\}}\prod \left(p_{i}^{\gamma_{i}} - p_{i}^{\beta_i-1}\right)\right)\left(p_{k}^{\gamma_{k}} - p_{k}^{\beta_k-2}\right)  - 1
\end{equation*}
and
\begin{equation*}
\deg(y) =  o(y) - \phi(o(y))
+ \left( \underset{j\in[r]\setminus \tau_x} \prod p_j^{\alpha_j} \right) \left(\underset{i\in \tau_x}\prod \left(p_{i}^{\gamma_{i}} - p_{i}^{\beta_i-1}\right)\right)  - 1.
\end{equation*}
Writing $\Theta = \deg(x) - \deg(y)$, we have
\begin{align*}
\Theta & = \left( \underset{j\in[r]\setminus \tau_x} \prod p_j^{\alpha_j} \right) \left(\underset{i\in \tau_x\setminus\{k\}}\prod \left(p_{i}^{\gamma_{i}} - p_{i}^{\beta_i-1}\right)\right) \left(p_k^{\beta_k-1} - p_k^{\beta_k-2}\right)\\
& \quad - p_k^{\beta_k -2}\left(\underset{i\in \tau_x\setminus\{k\}}\prod p_i^{\beta_i -1}\right)\left[\underset{i\in \tau_x}\prod p_i - \phi\left(\underset{i\in \tau_x}\prod p_i\right) \right](p_k -1)\\
& = \phi\left(p^{\beta_k-1}_{n_k}\right) \left(\underset{i\in \tau_x\setminus\{k\}}\prod p_i^{\beta_i -1}\right)\times\\
&\quad \left[\left( \underset{j\in[r]\setminus \tau_x} \prod p_j^{\alpha_j} \right) \left( \underset{i\in \tau_x\setminus\{k\}}\prod \left(p_i^{\gamma_i-\beta_i+1} - 1\right) \right) + \phi\left(\underset{i\in \tau_x}\prod p_i\right) - \underset{i\in \tau_x}\prod p_i  \right]\\
& = \phi\left(p^{\beta_k-1}_{n_k}\right) \left(\underset{i\in \tau_x\setminus\{k\}}\prod p_i^{\beta_i -1}\right)\times \\
& \quad \left[\phi\left(\underset{i\in \tau_x\setminus\{k\}}\prod p_i\right)\left[\left( \underset{j\in[r]\setminus \tau_x} \prod p_j^{\alpha_j} \right) \left( \underset{i\in \tau_x\setminus\{k\}}\prod \left(p_i^{\gamma_i-\beta_i}+\cdots +p_i + 1\right) \right)+ \phi(p_k)\right]  - \underset{i\in \tau_x}\prod p_i \right] .
\end{align*}
Note that $\beta_l < \gamma_l$ as $\langle x_{l} \rangle$ is not a maximal cyclic subgroup of $P_{l}$. Since $k<l$, we have $p_k<p_l$ and so $p_l +1 > \phi(p_k)$. This gives
$$\left( \underset{j\in[r]\setminus \tau_x} \prod p_j^{\alpha_j} \right) \left( \underset{i\in \tau_x\setminus\{k\}}\prod \left(p_i^{\gamma_i-\beta_i}+\cdots +p_i + 1\right) \right) > \phi(p_{k}).$$
Therefore,
$$\Theta > \phi\left(p^{\beta_k-1}_{n_k}\right) \left(\underset{i\in \tau_x\setminus\{k\}}\prod p_i^{\beta_i -1}\right)\left[2\phi\left(\underset{i\in \tau_x}\prod p_i\right)- \underset{i\in \tau_x}\prod p_i\right]\geq 0,$$
where the last inequality holds using the given hypothesis that $2\phi\left(\underset{i\in \tau_x}\prod p_i\right) \geq \underset{i\in \tau_x}\prod p_i$.
\end{proof}

\section{Proof of Theorem \ref{main-2}}\label{sec-main-2}

In this section, $G$ is a noncyclic nilpotent group of order $n$ with $r\geq 2$. We start with the following lemma, which gives a sufficient condition for a vertex attaining the minimum degree in $\mathcal{P}(G)$ to be contained in a unique maximal cyclic subgroup of $G$.

\begin{lemma}\label{deg.comp1}
Let $x\in G\setminus\{e\}$ be such that $\deg(x)=\delta(\mathcal{P}(G))$. If $2\phi\left(\underset{i\in [r]}\prod p_i\right) \geq \underset{i\in [r]}\prod p_i$, then $x$ is contained in a unique element of $\mathcal{M}(G)$.
\end{lemma}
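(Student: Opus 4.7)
The plan is to argue by contrapositive: assuming $x \in G \setminus \{e\}$ lies in at least two distinct maximal cyclic subgroups of $G$, I will produce an element $z \in G$ with $\deg(z) < \deg(x)$, contradicting $\deg(x) = \delta(\mathcal{P}(G))$. My starting point is the observation that, because $\mathcal{P}(G)$ has diameter at most $2$, the closed neighborhood of any vertex $v$ coincides with the union of all maximal cyclic subgroups of $G$ containing $v$; hence $\deg(v) + 1$ is the size of that union. Combined with the Sylow factorization $M = M_1 \cdots M_r$ of any maximal cyclic subgroup $M$ of $G$ (with $M_i \in \mathcal{M}(P_i)$), the hypothesis that $x$ lies in at least two maximal cyclic subgroups of $G$ splits into two complementary cases, depending on whether: (A) $x_{i^*}$ lies in at least two maximal cyclic subgroups of $P_{i^*}$ for some $i^* \in \tau_x$, or (B) for every $i \in \tau_x$ the element $x_i$ lies in a unique maximal cyclic subgroup $M_i$ of $P_i$, while some $P_k$ with $k \notin \tau_x$ is noncyclic.

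In case (A), I would choose, for each $i \in \tau_x$, a generator $z_i$ of a maximal cyclic subgroup $M_i$ of $P_i$ containing $x_i$, selecting $z_{i^*}$ as generator of one of the two available options (denote the other by $M_{i^*}'$). Setting $x' := \prod_{i \in \tau_x} z_i$, I would verify that the closed neighborhood of $x'$ sits inside that of $x$ and then exhibit a strict separator: a generator $v$ of $M_{i^*}'$ with all other Sylow components trivial lies in some maximal cyclic subgroup containing $x$ but in no maximal cyclic subgroup containing $x'$, because two distinct maximal cyclic subgroups of the finite $p$-group $P_{i^*}$ cannot both sit inside a common cyclic subgroup. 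This forces $\deg(x') < \deg(x)$, contradicting minimality.

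In case (B), the same construction $x' = \prod_{i \in \tau_x} z_i$ with $z_i$ generating the unique $M_i$ makes $x$ and $x'$ lie in exactly the same collection of maximal cyclic subgroups of $G$, so $\deg(x) = \deg(x')$. Then, using the Chinese remainder theorem and the freedom in choosing $M_j$ for $j \notin \tau_x$, I would build a generator $y$ of some maximal cyclic subgroup $M \in \mathcal{M}(G)$ containing $x$ with $y_i = z_i$ for every $i \in \tau_x$, so that $x' = \prod_{i \in \tau_x} y_i$. The hypothesis $2\phi\left(\prod_{i \in [r]} p_i\right) \geq \prod_{i \in [r]} p_i$ transfers to the proper subset $\tau_x$ of $[r]$ via the remark in Section 1.2, and Proposition \ref{propcomp}(ii) applied with $I = \tau_x$ then delivers $\deg(y) < \deg(x') = \deg(x)$, again contradicting minimality.

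The main obstacle will be the structural bookkeeping that cleanly separates the two origins of multiplicity in the collection of maximal cyclic subgroups containing $x$, together with the verification in case (B) that the passage $x \mapsto x'$ leaves the neighborhood (and hence the degree) unchanged so that the generator $y$ supplied by Proposition \ref{propcomp}(ii) genuinely undercuts $\deg(x)$. The $p$-group fact that distinct maximal cyclic subgroups cannot both embed in a common cyclic subgroup is the structural input behind case (A); the totient hypothesis enters only in case (B) through Proposition \ref{propcomp}(ii).
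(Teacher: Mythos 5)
Your opening observation---that the closed neighborhood of a vertex $v$ equals the union of the maximal cyclic subgroups containing $v$, so that $\deg(v)+1$ is the size of that union---is false, and both halves of your argument lean on it. Two vertices are adjacent only when one is a power of the other; lying in a common maximal cyclic subgroup is not enough. Already in $C_6=\langle a\rangle$, the elements $a^2$ and $a^3$ lie in the same (unique) maximal cyclic subgroup but are not adjacent. This breaks case (A): taking $G=(C_3\times C_9)\times C_5$ with $P_1=\langle a\rangle\times\langle b\rangle$, $P_2=\langle c\rangle$, $x=b^3c$ and $x'=bc$, the element $b$ lies in the closed neighborhood of $x'$ but not in that of $x$, so your claimed containment of neighborhoods fails; your ``separator'' only shows that the union of maximal cyclic subgroups shrinks when passing from $x$ to $x'$, which by itself says nothing about degrees. (The inequality $\deg(x')<\deg(x)$ does hold there, but for a different reason, and note that the non-strict bound of Proposition \ref{degprime} alone is not enough to see it: in that example the bound for $\deg(x)$ is $38$ while $\deg(x')=44$; one must actually count the extra neighbors created by the second maximal cyclic subgroup.) The flaw is fatal to case (B): take $G=C_9\times(C_5\times C_5)$ with $P_1=\langle b\rangle$, $x=b^3$, $x'=b$. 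Then $x$ and $x'$ lie in exactly the same six maximal cyclic subgroups of $G$, yet Proposition \ref{degprime} (with equality, since $P_1$ is cyclic) gives $\deg(x)=200$ and $\deg(x')=152$. So the asserted equality $\deg(x)=\deg(x')$ is simply wrong, and the chain $\deg(y)<\deg(x')=\deg(x)$ collapses. What you would actually need there is $\deg(x)\ge\deg(x')$, a nontrivial inequality in the spirit of Lemma \ref{lem32} that requires its own proof; Corollary \ref{rem2} does not apply because $x$ need not generate a maximal cyclic subgroup of $\prod_{i\in\tau_x}P_i$.

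For comparison, the paper's proof needs none of this case analysis. If $M\neq M'$ are maximal cyclic subgroups containing $x$, with $M=\langle y\rangle$ of smaller order, then $x$ generates neither (a generator of $M$ inside $\langle x\rangle$ would force $\langle x\rangle=M\subseteq M'$, hence $M=M'$), so the set $\langle x\rangle\setminus\{x\}$, the $\phi(|M|)$ generators of $M$, and the $\phi(|M'|)$ generators of $M'$ are pairwise disjoint sets of neighbors of $x$. Hence $\deg(x)\ge o(x)-1+\phi(|M|)+\phi(|M'|)\ge o(x)-1+2\phi(o(y))>o(y)-1=\deg(y)$, where $2\phi(o(y))\ge o(y)$ follows from the totient hypothesis because $o(y)$ is divisible by every $p_i$. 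This contradicts $\deg(x)=\delta(\mathcal{P}(G))$ directly. I would encourage you either to adopt this count or, if you want to keep your decomposition, to supply genuine degree comparisons (via Proposition \ref{degprime}) in place of the neighborhood-union heuristic in both cases.
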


\begin{proof}
Suppose that $M$ and $M'$ are two distinct elements of $\mathcal{M}(G)$ containing $x$, where $M$ is of least possible order among all such maximal cyclic subgroups of $G$. Then $M_i$ is an element of $\mathcal{M}(P_i)$ of minimum order containing $x_i$ for each $i\in [r]$. So $|M_i|\leq |M'_i|$ and hence $\phi(|M_i|)\leq \phi(|M'_i|)$ for each $i\in [r]$. Then $\phi(|M'|)=\underset{i\in [r]}\prod \phi(|M'_i|)\geq \underset{i\in [r]}\prod \phi(|M_i|)=\phi(|M|)$.

Let $M=\langle y \rangle$ for some $y\in G$. We have $\deg(y)=o(y)-1=|M|-1$. Since $x$ is neither a generator of $M$ nor a generator of $M'$, it follows that
$$\deg(x) \geq o(x) - 1 + \phi(|M'|)+\phi(|M|)\geq o(x) - 1+2\phi(|M|)= o(x) - 1+2\phi(o(y)).$$
Then
\begin{align*}
\deg(x) - \deg(y)  > 2\phi(o(y)) - o(y) = \frac{o(y)}{p_1p_2\cdots p_r}\left[2\phi\left(\underset{i\in [r]}\prod p_i\right) - \underset{i\in [r]}\prod p_i\right] \geq 0.
\end{align*}
This gives $\deg(x) >\deg(y)\geq \delta(\mathcal{P}(G))$, a contradiction.
\end{proof}

The proof of the following lemma is straightforward.

\begin{lemma}\label{cor2}
Suppose that $\delta({\mathcal{P}(G)})=\deg(x)$ for some $x \in G\setminus\{e\}$ contained in a unique element of $\mathcal{M}(G)$. Then $P_k$ is cyclic for every $k\in [r]\setminus \tau_x$. In particular, $\tau_x=[r]$ if every Sylow subgroup of $G$ is noncyclic.
\end{lemma}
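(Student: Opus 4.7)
The plan is to prove the contrapositive of the main conclusion, using the structural description of $\mathcal{M}(G)$ recalled at the start of Section 2: every element of $\mathcal{M}(G)$ is of the form $N_1 N_2 \cdots N_r$ with $N_i \in \mathcal{M}(P_i)$, and conversely any such product lies in $\mathcal{M}(G)$. Note that this correspondence is a bijection, since one recovers $N_i$ as $(N_1 \cdots N_r) \cap P_i$ from the internal direct product decomposition $G = P_1 P_2 \cdots P_r$. The hypothesis $\deg(x)=\delta(\mathcal{P}(G))$ will play no direct role in the argument; what matters is the assumption that $x$ lies in exactly one element of $\mathcal{M}(G)$.

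I would fix the unique $M \in \mathcal{M}(G)$ with $x \in M$ and write $M = M_1 M_2 \cdots M_r$ with $M_i = M \cap P_i \in \mathcal{M}(P_i)$. Suppose for contradiction that there exists $k \in [r] \setminus \tau_x$ for which $P_k$ is noncyclic. Since every element of the finite group $P_k$ lies in some maximal cyclic subgroup, $P_k$ being noncyclic forces $|\mathcal{M}(P_k)| \geq 2$: otherwise a single maximal cyclic subgroup would cover $P_k$, making $P_k$ cyclic. Hence I can pick $M_k' \in \mathcal{M}(P_k)$ with $M_k' \neq M_k$.

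Now form $M' := M_1 \cdots M_{k-1} M_k' M_{k+1} \cdots M_r$. By the cited characterization, $M' \in \mathcal{M}(G)$, and by the bijective correspondence between $\mathcal{M}(G)$ and $\prod_{i\in[r]} \mathcal{M}(P_i)$ we get $M' \neq M$. Because $k \in [r] \setminus \tau_x$ we have $x_k = e \in M_k'$, while $x_i \in M_i$ for every $i \neq k$; therefore $x = x_1 x_2 \cdots x_r \in M'$. Thus $x$ lies in two distinct elements of $\mathcal{M}(G)$, contradicting the uniqueness hypothesis. This forces $P_k$ to be cyclic for every $k \in [r] \setminus \tau_x$, and the "in particular" clause is immediate: if every Sylow subgroup of $G$ is noncyclic, the set $[r] \setminus \tau_x$ must be empty, i.e., $\tau_x = [r]$.

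No step is really hard; the only thing to be slightly careful about is justifying $|\mathcal{M}(P_k)| \geq 2$ when $P_k$ is noncyclic (a covering argument using that every element of a finite group belongs to some maximal cyclic subgroup) and the injectivity of the product correspondence $(N_1, \ldots, N_r) \mapsto N_1 \cdots N_r$, which is a direct consequence of the internal direct product structure.
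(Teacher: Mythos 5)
Your proof is correct, and since the paper omits the argument as ``straightforward,'' what you wrote is precisely the intended one: replace the $k$-th factor $M_k=M\cap P_k$ of the unique $M\in\mathcal{M}(G)$ containing $x$ by a different $M_k'\in\mathcal{M}(P_k)$ (which exists because a noncyclic finite $p$-group cannot be covered by a single maximal cyclic subgroup) and use the product characterization of $\mathcal{M}(G)$ from Section~2 to produce a second maximal cyclic subgroup containing $x$. You are also right that the minimality hypothesis $\deg(x)=\delta(\mathcal{P}(G))$ plays no role; only the uniqueness of $M$ is used.
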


The following proposition gives a sufficient condition so that the order of a vertex attaining the minimum degree in $\mathcal{P}(G)$ is divisible by at least $r-1$ distinct primes.

\begin{proposition}\label{propimp}
Let $\delta({\mathcal{P}(G)})=\deg(x)$ for some $x \in G\setminus\{e\}$. Suppose that $x$ is contained in a unique element of $\mathcal{M}(G)$ and that $2\phi\left(\underset{i\in \tau_x}\prod p_i\right) \geq \underset{i\in \tau_x}\prod p_i$. Then $|[r]\setminus \tau_x|\leq 1$.
\end{proposition}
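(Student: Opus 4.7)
The plan is to argue by contradiction: assume $|[r]\setminus\tau_x|\ge 2$ and exhibit a vertex $x'\in G$ with $\deg(x')<\deg(x)$, contradicting $\deg(x)=\delta(\mathcal{P}(G))$. Let $M\in\mathcal{M}(G)$ be the unique maximal cyclic subgroup containing $x$, with generator $y=y_1y_2\cdots y_r$. By Lemma~\ref{cor2}, $P_j$ is cyclic for every $j\in[r]\setminus\tau_x$, so for such $j$ we have $M_j=P_j$ and $o(y_j)=p_j^{\alpha_j}$.

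Put $k:=\min([r]\setminus\tau_x)$ and $x':=xy_k\in M$. Since $x_k=e$, the components of $x'$ are $(x')_i=x_i$ for $i\ne k$ and $(x')_k=y_k$, whence $\tau_{x'}=\tau_x\cup\{k\}$ and $o(x')=o(x)\cdot p_k^{\alpha_k}$. The uniqueness assumption on $x$ forces every $x_i$ ($i\in\tau_x$) to lie in a unique maximal cyclic subgroup of $P_i$, and since $y_k$ generates the cyclic group $P_k$, the element $x'$ lies in a unique maximal cyclic subgroup of $\prod_{i\in\tau_{x'}}P_i$, namely $\prod_{i\in\tau_{x'}}M_i$. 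Hence Proposition~\ref{degprime} applies with equality to both $x$ and $x'$.

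Writing $|M_i|=p_i^{\gamma_i}$, $o(x)=\prod_{i\in\tau_x}p_i^{\beta_i}$, $q=p_k^{\alpha_k}$, $A=\prod_{j\in[r]\setminus\tau_x}p_j^{\alpha_j}$, $B=A/q$, and $S=\prod_{i\in\tau_x}(p_i^{\gamma_i}-p_i^{\beta_i-1})$, subtracting the two resulting degree formulas yields
$$\deg(x')-\deg(x) \;=\; o(x)(q-1)-\phi(o(x))(\phi(q)-1)-SA/p_k.$$
Two elementary bounds then suffice: from $\gamma_i\ge\beta_i$ one has $p_i^{\gamma_i}-p_i^{\beta_i-1}\ge\phi(p_i^{\beta_i})$, so $S\ge\phi(o(x))$; and the hypothesis $2\phi(\prod_{i\in\tau_x}p_i)\ge\prod_{i\in\tau_x}p_i$ is equivalent to $o(x)\le 2\phi(o(x))$. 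Combining these and simplifying with $\phi(q)=q-q/p_k$ and $A=qB$ gives
$$\deg(x')-\deg(x) \;\le\; \phi(o(x))\bigl[\,q(p_k+1-B)/p_k - 1\,\bigr].$$
Since $|[r]\setminus\tau_x|\ge 2$ some $l\in[r]\setminus(\tau_x\cup\{k\})$ exists, and by the choice $k=\min([r]\setminus\tau_x)$ we have $p_l>p_k$, hence $p_l\ge p_k+1$ and $B\ge p_l^{\alpha_l}\ge p_k+1$. Thus the bracket is at most $-1$, forcing $\deg(x')\le\deg(x)-\phi(o(x))<\deg(x)$, the desired contradiction.

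The main obstacle is the final step: the hypothesis $2\phi(\prod p_i)\ge\prod p_i$ admits equality cases (e.g., $\tau_x=\{1\}$ with $p_1=2$, or $\tau_x=\{1,2\}$ with $p_1=2,p_2=3$), and $S=\phi(o(x))$ exactly when $x$ generates $\prod_{i\in\tau_x}M_i$, so both preceding bounds can be simultaneously tight. The full margin closing the argument must therefore come from ensuring $B\ge p_k+1$; this is exactly what the minimality of $k$ guarantees, while an arbitrary choice of $k\in[r]\setminus\tau_x$ can fail (for instance, $[r]\setminus\tau_x=\{1,2\}$ with $p_1=2$, $p_2=5$ and $\alpha_1=1$ forces $B=2<p_2$ if one takes $k=2$).
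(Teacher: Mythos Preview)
Your proof is correct and follows essentially the same strategy as the paper's: assume $|[r]\setminus\tau_x|\ge 2$, pick $k<l$ in $[r]\setminus\tau_x$, multiply $x$ by a suitable element of $M_k$, and use the equality case of Proposition~\ref{degprime} together with the hypothesis $2\phi(o(x))\ge o(x)$ and the inequality $p_l>p_k$ to force the new vertex to have strictly smaller degree. The only variation is that you adjoin the full generator $y_k$ of $P_k$ (order $p_k^{\alpha_k}$) whereas the paper adjoins an element $z_k\in M_k$ of order $p_k$; both choices lead to the same contradiction after the analogous computation.
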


\begin{proof}
Let $M$ be the unique element of $\mathcal{M}(G)$ containing $x$. Put $|M| = \underset{i\in [r]}\prod p_i^{\gamma_i}$ and $o(x) = \underset{i\in \tau_x}\prod p_i^{\beta_i}$, where $1\leq \gamma_i\leq \alpha_i$ for $i\in [r]$ and $1\leq \beta_i\leq \gamma_i$ for $i\in \tau_x$.

On the contrary, suppose that $|[r]\setminus \tau_x|\geq 2$. Let $k, l \in [r]\setminus \tau_x$ with $k < l$. Consider $z_k \in M_k$ with $o(z_k) = p_k$. Then $x \in \langle xz_k \rangle$, as $(xz_k)^{p_k}=x^{p_k}z_k^{p_k}=x^{p_k}$ and $\langle x\rangle=\langle x^{p_k}\rangle$. Note that $M$ is the only element of $\mathcal{M}(G)$ containing $xz_k$ as the same is true for $x$. Thus, $\underset{i\in \tau_x}\prod M_{i}$ is the only maximal cyclic subgroup of $\underset{i\in \tau_x}\prod P_{i}$ containing $x$, and $\underset{i\in \tau_x\cup\{k\}}\prod M_{i}$ is the only maximal cyclic subgroup of $\underset{i\in \tau_x\cup\{k\}}\prod P_{i}$ containing $xz_k$. By Proposition \ref{degprime}, we have
$$\deg(x)=o(x) - \phi(o(x))
+ \left( \underset{j\in[r]\setminus \tau_x} \prod p_j^{\alpha_j} \right) \left(\underset{i\in \tau_x}\prod \left(p_{i}^{\gamma_{i}} - p_{i}^{\beta_i-1}\right)\right)  - 1$$
and
$$\deg(xz_k)=o(xz_k) - \phi(o(xz_k))
+ \left( \underset{j\in[r]\setminus (\tau_x\cup\{k\})} \prod p_j^{\alpha_j} \right)(p_k^{\gamma_k} -1) \left(\underset{i\in \tau_x}\prod \left(p_{i}^{\gamma_{i}} - p_{i}^{\beta_i-1}\right)\right)  - 1,$$
as $o(z_k)=p_k$. Writing $\Theta:=\deg(x) -  \deg(xz_k)$, we get
\begin{align}\label{eqn-5}
\Theta & = \phi(o(x)) [\phi(p_k)-1] - o(x) (p_k-1)\nonumber \\
& \qquad + \left( \underset{j\in[r]\setminus (\tau_x\cup\{k\})} \prod p_j^{\alpha_j} \right)  \left(\underset{i\in \tau_x}\prod \left(p_{i}^{\gamma_{i}} - p_{i}^{\beta_i-1}\right)\right) \left(p_{k}^{\alpha_{k}} - p_{k}^{\gamma_{k}} + 1 \right).
\end{align}
For $i\in \tau_x$, we have
\begin{align*}
p_{i}^{\gamma_{i}} - p_{i}^{\beta_{i}-1} =p_{i}^{\beta_{i}-1} (p_i -1) \left(p_{i}^{\gamma_{i}-\beta_{i}} + \cdots + p_i+1\right) =\phi\left(p_{i}^{\beta_{i}}\right) \left(p_{i}^{\gamma_{i}-\beta_{i}} + \cdots + p_i+1\right).
\end{align*}
So (\ref{eqn-5}) becomes
\begin{align*}
\Theta &= \phi(o(x)) \left[p_k-2+ \left( \underset{j\in[r]\setminus (\tau_x\cup\{k\})} \prod p_j^{\alpha_j} \right)  \left(\underset{i\in \tau_x}\prod \left(p_{i}^{\gamma_{i}-\beta_{i}} + \cdots + p_i+1\right)\right) \left(p_{k}^{\alpha_{k}} - p_{k}^{\gamma_{k}} + 1 \right)\right]\\
& \qquad - o(x) (p_k-1)\\
& \geq \phi(o(x)) \left[p_k-2+ p_l^{\alpha_l} \right] - o(x) (p_k-1).
\end{align*}
As $k <l$, we have $p_k < p_l$ and so $p_k-2+ p_l^{\alpha_l}> 2(p_k -1)$. Then
$$\Theta > (p_k-1)[2\phi(o(x)) - o(x)]=(p_k-1)\left(\underset{i\in \tau_x}\prod p_i^{\beta_i -1}\right)\left[2\phi\left(\underset{i\in \tau_x}\prod p_i\right) - \underset{i\in \tau_x}\prod p_i\right]\geq 0,$$
where the last inequality holds using the given hypothesis that $2\phi\left(\underset{i\in \tau_x}\prod p_i\right) \geq \underset{i\in \tau_x}\prod p_i$. We thus get $\deg(x) >\deg(xz_k)$, which contradicts the fact that $\delta({\mathcal{P}(G)}) = \deg(x)$.
\end{proof}

In the following proposition, we consider the case when the order of a vertex attaining the minimum degree in $\mathcal{P}(G)$ is divisible by exactly $r-1$ distinct primes.

\begin{proposition}\label{propimp-2}
Let $\delta({\mathcal{P}(G)})=\deg(x)$ for some $x \in G\setminus\{e\}$. Suppose that $x$ is contained in a unique element $M$ of $\mathcal{M}(G)$ and that $2\phi\left(\underset{i\in \tau_x}\prod p_i\right) \geq \underset{i\in \tau_x}\prod p_i$. If $[r]\setminus \tau_x=\{k\}$ for some $k\in [r]$, then
$x = y^{p_k^{\alpha_k}}$
for some generator $y$ of $M$.
\end{proposition}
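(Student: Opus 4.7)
The goal is equivalent to showing that $\beta_i = \gamma_i$ for every $i\in\tau_x$, where I write $M = \langle g\rangle$, $|M_i| = p_i^{\gamma_i}$, $o(x_i) = p_i^{\beta_i}$, and $x = g^s$; by Lemma \ref{cor2} the Sylow subgroup $P_k$ is cyclic, so $\gamma_k = \alpha_k$ and $M_k = P_k$. Applying the Chinese remainder theorem to the factorization $|M| = p_k^{\alpha_k}\prod_{i\in\tau_x}p_i^{\gamma_i}$ and using $p_k^{\alpha_k}\mid s$ (automatic since $x_k=e$), one sees that a generator $y = g^t$ of $M$ with $y^{p_k^{\alpha_k}} = x$ exists if and only if $\gcd(s/p_k^{\alpha_k}, \prod_{i\in\tau_x}p_i^{\gamma_i}) = 1$, equivalently $\beta_i = \gamma_i$ for every $i\in\tau_x$. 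I argue by contradiction: suppose $\beta_l < \gamma_l$ for some $l\in\tau_x$, and produce $y\in G$ with $\deg(y) < \deg(x) = \delta(\mathcal{P}(G))$.

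\emph{Case A} (at least two indices of $\tau_x$ satisfy $\beta_i < \gamma_i$): pick $j < l$ in $\tau_x$ with $\beta_j < \gamma_j$ and $\beta_l < \gamma_l$, and set $y := g^{s/p_j}\in M$ (well-defined since $p_j\mid s$). Then $y^{p_j} = x$, $\tau_y = \tau_x$, and a CRT computation on the $l$-component gives $o(y_l) = p_l^{\beta_l}<p_l^{\gamma_l}$, so $\langle y_l\rangle\subsetneq M_l$ and hence $\langle y_l\rangle\notin\mathcal{M}(P_l)$. The uniqueness of $M\in\mathcal{M}(G)$ containing $x$ yields the uniqueness of $\prod_{i\in\tau_x}M_i\in\mathcal{M}(\prod_{i\in\tau_x}P_i)$ containing $x$, so Proposition \ref{proplessthan} applies (with $|\tau_x|\geq 2$) and gives $\deg(y) < \deg(x)$.

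\emph{Case B} ($l$ is the unique index of $\tau_x$ with $\beta_l<\gamma_l$, so $\beta_i=\gamma_i$ for all $i\in\tau_x\setminus\{l\}$; this subsumes the case $r=2$): Proposition \ref{proplessthan} is unavailable, because any $y\in G$ with $y^{p_j}=x$ for $j\in\tau_x\setminus\{l\}$ would require $x_j$ to have a $p_j$-th root in $P_j$, which is impossible since $\langle x_j\rangle = M_j\in\mathcal{M}(P_j)$ is already maximal. Instead I compare $\deg(x)$ with the degrees of the two candidates
\[
y' \;:=\; g^{p_k^{\alpha_k}} \qquad\text{and}\qquad y'' \;:=\; g^{p_l^{\gamma_l-\beta_l}},
\]
both lying in the unique $M$, so both degrees are given explicitly by Proposition \ref{degprime}. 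A direct component-by-component analysis shows that $\tau_{y'}=\tau_x$ with $y'_i$ generating $M_i$ for each $i\in\tau_x$, while $\tau_{y''}=[r]$ with $y''_k$ generating $P_k$, $y''_l$ of order $p_l^{\beta_l}$, and $y''_i$ generating $M_i$ for $i\in\tau_x\setminus\{l\}$. Rewriting the hypothesis in the equivalent form $p_l \prod_{i\in\tau_x, i\neq l}p_i^{\gamma_i} \leq 2(p_l-1)\prod_{i\in\tau_x, i\neq l}\phi(p_i^{\gamma_i})$, algebraic manipulation of the closed forms from Proposition \ref{degprime} yields $\deg(y') < \deg(x)$ whenever $p_k > p_l$ and $\deg(y'') < \deg(x)$ whenever $p_k < p_l$; since $p_k \neq p_l$, one of the two alternatives always holds, producing the required contradiction.

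The principal difficulty is the algebraic comparison in Case B: the sole defective index $l$ produces a delicate cancellation in $\deg(x)$ that must be outweighed either by the order-reduction in passing from $x$ to $y'$ or by the extra factor of $\phi(p_k^{\alpha_k})$ contributed by $y''$, and one has to invoke the hypothesis on $\phi$-values judiciously (together with the dichotomy $p_k\gtrless p_l$) to make at least one of the two comparisons go through.
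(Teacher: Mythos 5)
Your proposal follows essentially the same route as the paper's own proof: you reduce the claim to showing that the set of "defective" indices $i\in\tau_x$ with $\beta_i<\gamma_i$ (the paper's set $I$) is empty, handle the case of two or more defective indices by exactly the paper's application of Proposition \ref{proplessthan} to $g^{s/p_j}$, and in the single-defect case you compare $\deg(x)$ with the degrees of the same two elements the paper uses, $g^{p_k^{\alpha_k}}$ and $g^{p_l^{\gamma_l-\beta_l}}$, under the same dichotomy $p_k\gtrless p_l$ (these are the paper's quantities $\Theta_2$ and $\Theta_1$). The one thing you assert rather than prove is precisely the pair of inequalities $\Theta_1>0$ and $\Theta_2>0$, which is where the paper spends most of its effort; those inequalities do hold via the expansion from Proposition \ref{degprime} together with the estimates $p_k^{\alpha_k}\geq p_l+1$ (when $k>l$) and $p_k^{\alpha_k-1}(p_l^{\eta_l}+\cdots+p_l)>p_k^{\alpha_k}-1$ (when $k<l$), followed by the hypothesis $2\phi\bigl(\prod_{i\in\tau_x}p_i\bigr)\geq\prod_{i\in\tau_x}p_i$ exactly as you anticipate, so your outline is correct but that computation needs to be written out to complete the argument.
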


\begin{proof}
Put $|M|=p_1^{\gamma_1} p_2^{\gamma_2} \cdots p_r^{\gamma_r}$, where $1\leq \gamma_i\leq\alpha_i$ for $i\in [r]$. Since $P_k$ is cyclic by Lemma \ref{cor2}, we have $\gamma_k=\alpha_k$. Let $z$ be a generator of $M$. Then $z^m=x$ for some integer $m$ with $1<m< |M|$. Since $x_k=e$, we have $z_k^m=e$ and so $m$ is divisible by $o(z_k)=p_k^{\alpha_k}$. We can write
$$m=a p_1^{\eta_1}\cdots p_{k-1}^{\eta_{k-1}}p_k^{\alpha_k}p_{k+1}^{\eta_{k+1}}\cdots p_r^{\eta_r},$$
where $0\leq \eta_i < \gamma_i$ for $i\in [r]\setminus\{k\}$ and $a$ is a positive integer relatively prime to $p_i$ for every $i\in [r]$.
Define $y:=z^a$ and $I:=\{i\in [r]\setminus\{k\}:\eta_i\geq 1\}\subseteq \tau_x$. Then $y$ is a generator of $M$ and $y^t=x$, where the integer $t$ is given by
$$t=p_k^{\alpha_k}\underset{i\in I}\prod p_i^{\eta_i}.$$
In order to prove the proposition, it is enough to show that $I$ is empty.

Suppose that $I$ is nonempty. First consider that $|I|\geq 2$. Let $j,l\in I\subseteq \tau_x$ with $j< l$. Taking $w:= y^{t/p_j}$, we get $w^{p_j}=x$. Further, $\langle w_l\rangle=\left\langle y_l^{t/p_j}\right\rangle $ is not a maximal cyclic subgroup of $P_l$ as $\eta_l\geq 1$. Since $2\phi\left(\underset{i\in \tau_x}\prod p_i\right) \geq \underset{i\in \tau_x}\prod p_i$, Proposition \ref{proplessthan} then implies that $\deg(w) < \deg(x)$, which contradicts the fact that $\deg(x)=\delta({\mathcal{P}(G)})$.

Now consider that $I=\{l\}$ for some $l\in [r]\setminus \{k\}$. Then $x = y^{p_k^{\alpha_k} p_l^{\eta_l}}$ with $\eta_l\geq 1$. In this case, we shall get a contradiction to the fact $\deg(x)=\delta({\mathcal{P}(G)})$ by showing that $\Theta_1:=\deg(x) - \deg\left(y^{p_l^{\eta_l}}\right)>0$ if $k <l$, and $\Theta_2:=\deg(x) - \deg\left(y^{p_k^{\alpha_k}}\right)>0$ if $k >l$. We have
\begin{align*}
o(x) & =o\left(y^{p_k^{\alpha_k} p_l^{\eta_l}}\right)=o(y)/p_k^{\alpha_k} p_l^{\eta_l}=p_l^{\gamma_l -\eta_l}\left(\underset{i\in [r]\setminus\{k,l\}}\prod p_i^{\gamma_i}\right),\\
o\left(y^{p_l^{\eta_l}}\right) & = o(y)/p_l^{\eta_l}=p_l^{\gamma_l -\eta_l}\left(\underset{i\in [r]\setminus\{k,l\}}\prod p_i^{\gamma_i}\right)p_k^{\alpha_k},\\
o\left(y^{p_k^{\alpha_k}}\right) & = o(y)/p_k^{\alpha_k}=p_l^{\gamma_l}\left(\underset{i\in [r]\setminus\{k,l\}}\prod p_i^{\gamma_i}\right).
\end{align*}
Note that $M$ is the only maximal cyclic subgroup of $G$ containing each of $y^{p_k^{\alpha_k}}$ and $y^{p_l^{\eta_l}}$ as the same is true for $x$. By Proposition \ref{degprime}, we have	
\begin{align*}
\deg(x)& =o(x) - \phi(o(x))
+ p_k^{\alpha_k} \left( p_l^{\gamma_l} - p_l^{\gamma_l - \eta_l-1}  \right) \left(\underset{i\in [r]\setminus\{k,l\}}\prod \left(p_{i}^{\gamma_{i}} - p_{i}^{\gamma_i-1}\right)\right)  - 1,\\
\deg\left(y^{p_l^{\eta_l}}\right) &= o\left(y^{p_l^{\eta_l}}\right)-\phi\left(o\left(y^{p_l^{\eta_l}}\right) \right) + \left( p_l^{\gamma_l} - p_l^{\gamma_l - \eta_l-1}  \right)\left(\underset{i\in [r]\setminus\{l\}}\prod \left(p_{i}^{\gamma_{i}} - p_{i}^{\gamma_i-1}\right)\right)-1, \\
\deg\left(y^{p_k^{\alpha_k}}\right) &= o\left(y^{p_k^{\alpha_k}}\right)-\phi\left( o\left(y^{p_k^{\alpha_k}}\right)\right) + p_k^{\alpha_k} \left(\underset{i\in [r]\setminus\{k\}}\prod \left(p_{i}^{\gamma_{i}} - p_{i}^{\gamma_i-1}\right)\right)-1,
\end{align*}
where $\gamma_k=\alpha_k$ in the second equality. First consider the case $k > l$. Then
\begin{align*}
\Theta_2 & = \phi\left( \underset{i\in [r]\setminus\{k,l\}} \prod p_i^{\gamma_i}\right) \left[ \phi\left(p_l^{\gamma_l}\right) - \phi\left(p_l^{\gamma_l - \eta_l}\right) \right] - \left(\underset{i\in [r]\setminus\{k,l\}}\prod p_i^{\gamma_i}\right) \left( p_l^{\gamma_l} - p_l^{\gamma_l - \eta_l}\right) \\
& \qquad \qquad\qquad \qquad\qquad  \qquad\qquad \qquad + p_{k}^{\alpha_{k}} \left( p_l^{\gamma_l- 1} - p_l^{\gamma_l - \eta_l-1}  \right) \phi \left( \underset{i\in [r]\setminus\{k,l\}}\prod  p_i^{\gamma_i} \right)  \\
&= \phi\left(\underset{i\in [r]\setminus\{k,l\}}\prod  p_i^{\gamma_i}\right) \left[\phi\left(p_l^{\gamma_l}\right) - \phi\left(p_l^{\gamma_l - \eta_l}\right)+ p_{k}^{\alpha_{k}} \left( p_l^{\gamma_l- 1} - p_l^{\gamma_l - \eta_l-1}  \right) \right]\\
& \qquad \qquad\qquad \qquad\qquad  \qquad\qquad  \qquad - \left(\underset{i\in [r]\setminus\{k,l\}}\prod p_i^{\gamma_i}\right) \left( p_l^{\gamma_l} - p_l^{\gamma_l - \eta_l}\right).
\end{align*}
Since $k > l$, we have $p_k > p_l\geq 2$ and so $p_k\geq p_l+1$. Using $1\leq \eta_l <\gamma_l$, we get
\begin{align*}
& \phi\left(p_l^{\gamma_l}\right) - \phi\left(p_l^{\gamma_l - \eta_l}\right)+ p_{k}^{\alpha_{k}} \left( p_l^{\gamma_l- 1} - p_l^{\gamma_l - \eta_l-1}  \right)  - 2 \left( p_l^{\gamma_l} - p_l^{\gamma_l - \eta_l}\right) \\
& \qquad = \left( p_l^{\gamma_l- 1} - p_l^{\gamma_l - \eta_l-1}  \right) \left[p_{k}^{\alpha_{k}} - (p_l+1) \right]\geq 0,
\end{align*}
giving $\phi\left(p_l^{\gamma_l}\right) - \phi\left(p_l^{\gamma_l - \eta_l}\right)+ p_{k}^{\alpha_{k}} \left( p_l^{\gamma_l- 1} - p_l^{\gamma_l - \eta_l-1}  \right)  \geq 2 \left( p_l^{\gamma_l} - p_l^{\gamma_l - \eta_l}\right)$. Thus
\begin{align*}
\Theta_2 \geq \left( p_l^{\gamma_l} - p_l^{\gamma_l - \eta_l}\right)\left[2\phi\left(\underset{i\in [r]\setminus\{k,l\}}\prod  p_i^{\gamma_i}\right) - \left(\underset{i\in [r]\setminus\{k,l\}}\prod p_i^{\gamma_i}\right)\right].
\end{align*}
Since $l\in \tau_x$, the hypothesis that $2\phi\left(\underset{i\in \tau_x}\prod p_i\right) \geq \underset{i\in \tau_x}\prod p_i$ implies $2\phi\left( \underset{i\in \tau_x\setminus\{l\}} \prod p_i\right) > \underset{i\in \tau_x\setminus\{l\}} \prod p_i$. It then follows that $\Theta_2 >0$ as $\eta_l <\gamma_l$ and $[r]\setminus\{k,l\}=\tau_x\setminus\{l\}$.
Next consider the case $k < l$. Then
\begin{align*}
\Theta_1 & = \phi\left( p_l^{\gamma_l - \eta_l}  \underset{i\in [r]\setminus\{k,l\}} \prod p_i^{\gamma_i}\right) \left(\phi\left( p_k^{\alpha_k}\right) - 1\right) - \left( p_l^{\gamma_l - \eta_l}  \underset{i\in [r]\setminus\{k,l\}} \prod p_i^{\gamma_i}\right) \left(p_k^{\alpha_k} - 1\right) \\
& \qquad \qquad\qquad \qquad\qquad  \qquad\qquad  \qquad\qquad + p_{k}^{\alpha_k-1} \left( p_l^{\gamma_l} - p_l^{\gamma_l - \eta_l-1}  \right) \phi \left( \underset{i\in [r]\setminus\{k,l\}} \prod p_i^{\gamma_i} \right)  \\
& =  p_l^{\gamma_l - \eta_l-1}  \left( \underset{i\in [r]\setminus\{k,l\}} \prod p_i^{\gamma_i-1} \right)\times \left[\phi\left( \underset{i\in [r]\setminus\{k\}} \prod p_i\right) \left(\phi\left( p_k^{\alpha_k}\right) - 1\right) - \left( \underset{i\in [r]\setminus\{k\}} \prod p_i \right) \left( p_k^{\alpha_k} - 1\right) \right.\\
& \qquad\qquad\qquad\qquad\qquad\qquad\qquad\qquad\qquad\qquad\quad \left. + p_{k}^{\alpha_k-1} \left( p_l^{\eta_l+1} - 1  \right) \phi \left(\underset{i\in [r]\setminus\{k,l\}} \prod p_i \right) \right]\\
& =  p_l^{\gamma_l - \eta_l-1}  \left( \underset{i\in [r]\setminus\{k,l\}} \prod p_i^{\gamma_i-1} \right)\times  \\
&  \quad \left[\phi\left( \underset{i\in [r]\setminus\{k\}} \prod p_i\right) \left[ \phi\left( p_k^{\alpha_k}\right) - 1 + p_{k}^{\alpha_k-1}\left( p_l^{\eta_l} + \dots + p_l + 1  \right) \right] - \left( \underset{i\in [r]\setminus\{k\}} \prod p_i \right) \left( p_k^{\alpha_k} - 1\right)  \right].
	\end{align*}
In the second equality above, we have used the fact that $\eta_l <\gamma_l$. Using $p_k < p_l$ (as $k< l$) and $\eta_l\geq 1$, we have
\begin{align*}
\phi\left( p_k^{\alpha_k}\right) - 1 + p_{k}^{\alpha_k-1}\left( p_l^{\eta_l} + \dots + p_l + 1  \right) - 2\left( p_k^{\alpha_k} - 1\right)&= p_k^{\alpha_k-1} \left( p_l^{\eta_l} + \dots + p_l \right)  -  p_k^{\alpha_k} + 1  \\
 & >   p_k^{\alpha_k-1} \left(p_l^{\eta_l} + \dots + p_l -  p_k \right)  \\
 & \geq   p_k^{\alpha_k-1} \left( p_l -p_k  \right) > 0.
 \end{align*}
 
 Thus,
\begin{align*}
\Theta_1  >  p_l^{\gamma_l - \eta_l-1}  \left( \underset{i\in [r]\setminus\{k,l\}} \prod p_i^{\gamma_i-1} \right)\times \left( p_k^{\alpha_k} - 1\right)\left[2\phi\left( \underset{i\in [r]\setminus\{k\}} \prod p_i\right)  - \left( \underset{i\in [r]\setminus\{k\}} \prod p_i \right)\right].
\end{align*}
Since $\tau_x=[r]\setminus\{k\}$ and $2\phi\left(\underset{i\in \tau_x}\prod p_i\right) \geq \underset{i\in \tau_x}\prod p_i$, it follows that $\Theta_1 >0$.
This completes the proof.
\end{proof}

For a nonempty proper subset $A$ of $G$, we denote by $\mathcal{P}(A)$ the induced subgraph of $\mathcal{P}(G)$ with vertex set $A$. In that case, for $x\in A$, the degree of $x$ in $\mathcal{P}(A)$ is denoted by $\deg_{\mathcal{P}(A)}(x)$.

Certain inequalities involving degree of vertices of $\mathcal{P}(C_n)$ were proved in \cite[Proposition 4.5]{PK-CA}. From the proof of \cite[Proposition 4.5(ii)]{PK-CA}, the following result with strict inequality can be obtained which we shall use in the proof Theorem \ref{main-2}.

\begin{lemma}[{\cite{PK-CA}}]\label{lem32}
Let $y$ be a generator of $C_n$, where $r\geq 2$. Then $\deg \left(y^{p_i^{\gamma}} \right) > \deg \left(y^{p_i^{\beta}} \right)$ in $\mathcal{P}(C_n)$ for every $i\in [r]$, where $0\leq \gamma < \beta \leq \alpha_i$.
\end{lemma}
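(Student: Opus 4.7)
The plan is to reduce to the fact that in the cyclic group $C_n$ adjacency in $\mathcal{P}(C_n)$ is equivalent to a divisibility relation between orders, so $\deg(z)$ depends only on $o(z)$. Since $C_n$ has a unique subgroup of each order dividing $n$, one has $w\in \langle z\rangle$ iff $o(w)\mid o(z)$ and $z\in \langle w\rangle$ iff $o(z)\mid o(w)$. Setting $d:=o(z)$ and using $|\{w\in C_n : o(w)=e\}|=\phi(e)$ for every $e\mid n$, I would first derive the closed form
$$\deg(z)+1 \;=\; d \;+\; \sum_{\substack{e\mid n\\ d\mid e,\; e>d}} \phi(e),$$
where the $d$ counts the elements of $\langle z\rangle$ and the sum counts the generators of the strict overgroups of $\langle z\rangle$ in $C_n$.

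Next I would specialize to $z=y^{p_i^\gamma}$, which has order $d=n/p_i^\gamma = p_i^{\alpha_i-\gamma}\,m$ with $m:=\prod_{j\in[r]\setminus\{i\}} p_j^{\alpha_j}$. Since $\gcd(p_i,m)=1$, the divisors of $n$ that are multiples of $p_i^{\alpha_i-\gamma}m$ are exactly $p_i^c m$ for $c\in\{\alpha_i-\gamma,\ldots,\alpha_i\}$. Applying the multiplicativity $\phi(p_i^c m)=\phi(p_i^c)\phi(m)$ together with the standard identity $\sum_{c=0}^k \phi(p^c)=p^k$, the tail sum evaluates to $\sum_{c=\alpha_i-\gamma+1}^{\alpha_i}\phi(p_i^c)=p_i^{\alpha_i}-p_i^{\alpha_i-\gamma}$. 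Combining these yields the compact formula
$$\deg\!\bigl(y^{p_i^\gamma}\bigr)+1 \;=\; p_i^{\alpha_i-\gamma}\bigl(m-\phi(m)\bigr)\;+\;p_i^{\alpha_i}\,\phi(m),$$
and the analogous identity with $\beta$ in place of $\gamma$.

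Finally, subtracting the two formulas (the constant term $p_i^{\alpha_i}\phi(m)$ cancels) gives
$$\deg\!\bigl(y^{p_i^\gamma}\bigr)-\deg\!\bigl(y^{p_i^\beta}\bigr) \;=\; \bigl(p_i^{\alpha_i-\gamma}-p_i^{\alpha_i-\beta}\bigr)\bigl(m-\phi(m)\bigr).$$
The first factor is strictly positive because $\gamma<\beta$, and the second is strictly positive because the hypothesis $r\geq 2$ forces $m$ to contain at least one prime factor, so $m>1$ and hence $\phi(m)<m$. This gives the required strict inequality, and both hypotheses of the lemma are used exactly once. No delicate estimation arises; the only real care is the enumeration of the multiples of $p_i^{\alpha_i-\gamma}m$ within the divisor lattice of $n$, and the prime-power structure of $n/p_i^\gamma$ collapses that enumeration into a single geometric sum.
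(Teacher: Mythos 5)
Your proof is correct. Note that the paper does not actually prove this lemma: it is imported verbatim from the proof of Proposition 4.5(ii) of the cited reference \cite{PK-CA}, so there is no in-paper argument to match yours against. What you supply is a clean, self-contained derivation. Your degree formula $\deg(z)+1=d+\sum_{e\mid n,\ d\mid e,\ e>d}\phi(e)$ for $o(z)=d$ is exactly right (the $d$ counts $\langle z\rangle$ including $z$ itself, and the tail counts generators of the strict overgroups, which are disjoint from $\langle z\rangle$), and the specialization to $d=p_i^{\alpha_i-\gamma}m$ with $m=\prod_{j\neq i}p_j^{\alpha_j}$ correctly identifies the relevant divisors as $p_i^c m$ and collapses the tail via $\sum_{c=0}^{k}\phi(p^c)=p^k$. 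The resulting identity
$$\deg\bigl(y^{p_i^{\gamma}}\bigr)-\deg\bigl(y^{p_i^{\beta}}\bigr)=\bigl(p_i^{\alpha_i-\gamma}-p_i^{\alpha_i-\beta}\bigr)\bigl(m-\phi(m)\bigr)$$
makes the strictness transparent and shows precisely where both hypotheses ($\gamma<\beta$ and $r\geq 2$, hence $m>1$) enter. This is arguably more informative than the citation the authors give, since it yields an exact expression for the degree gap rather than just the inequality, and it is consistent with the closed form for $\deg(x)$ obtained from Proposition \ref{degprime} elsewhere in the paper.
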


\medskip
\begin{proof}[{\bf Proof of Theorem \ref{main-2}}]
Let $\delta(\mathcal{P}(G)) = \deg(x)$ for some $x\in G\setminus\{e\}$. The given hypothesis $2\phi\left(\underset{i\in [r]}\prod p_i\right) \geq \underset{i\in [r]}\prod p_i$ implies that $2\phi\left(\underset{i\in \tau_x}\prod p_i\right) \geq \underset{i\in \tau_x}\prod p_i$ and that there is a unique element $M$ of $\mathcal{M}(G)$ containing $x$ by Lemma \ref{deg.comp1}. Put $|M|=p_1^{\gamma_1} p_2^{\gamma_2} \cdots p_r^{\gamma_r}$, where $1\leq \gamma_i\leq\alpha_i$ for $i\in [r]$. \medskip

We first claim that $x = y^{p_j^{\lambda_j}}$ for some generator $y$ of $M$ and for some $j\in [r]$, where $0 \leq \lambda_j \leq \gamma_j$.	
If $M = \langle x \rangle$, then the claim follows by taking $y = x$ and any $j\in [r]$ with $\lambda_j = 0$. Assume that $M \neq \langle x \rangle$. By Proposition \ref{propimp}, we have either $[r]\setminus \tau_x=\{j\}$ for some $j\in [r]$, or $\tau_x=[r]$. In the first case, $\gamma_j=\alpha_j$ as $P_j$ is cyclic by Lemma \ref{cor2} and then the claim follows from Proposition \ref{propimp-2} with $\lambda_j=\gamma_j=\alpha_j$.

Next consider $\tau_x=[r]$. Let $z$ be a generator of $M$. Then $z^m=x$ for some integer $m$ with $1<m< |M|$. We can write
$m=a p_1^{\eta_1}p_{2}^{\eta_{2}}\cdots p_r^{\eta_r}$, where $0\leq \eta_i < \gamma_i$ for $i\in [r]$ and $a$ is a positive integer relatively prime to $p_i$ for every $i\in [r]$. We note that at least one $\eta_i$ is nonzero as $x$ is not a generator of $M$. Define $y:=z^a$ and $I:=\{i\in [r]:\eta_i>0\}$. Then $|I|\geq 1$, $y$ is a generator of $M$ and $y^t=x$, where $t$ is given by
$$t=\underset{i\in I}\prod p_i^{\eta_i}.$$
If we show that $|I|=1$, then the claim will follow. Suppose that $|I|\geq 2$. Let $k,l\in I$ with $k< l$. Taking $w:= y^{t/p_k}$, we get $w^{p_k}=y^{t}=x$. Further, $\langle w_l\rangle=\left\langle y_l^{t/p_k}\right\rangle \notin\mathcal{M}(P_l)$ as $\eta_l\geq 1$. Proposition \ref{proplessthan} then implies that $\deg(w) < \deg(x)=\delta({\mathcal{P}(G)})$, a contradiction. This completes the proof of our claim.\medskip

Now we determine the value of $\lambda_j$. Let $m$ be the largest integer in $\{0,1,\ldots,\gamma_j\}$ such that $M$ is the only maximal cyclic subgroup of $G$ containing $y^{p_j^{m}}$. We show that $\lambda_j=m$.
Since $M$ is the only maximal cyclic group of $G$ containing each of $y^{p_j^{\lambda_j}}=x$ and $y^{p_j^{m}}$, we have $m\geq \lambda_j$, $\deg_{\mathcal{P}(M)}\left(y^{p_j^{\lambda_j}} \right)=\deg\left(y^{p_j^{\lambda_j}} \right)$ and $\deg_{\mathcal{P}(M)}\left(y^{p_j^{m}} \right)=\deg\left(y^{p_j^{m}} \right)$. If $m > \lambda_j$, then $\deg_{\mathcal{P}(M)}\left(y^{p_j^{m}} \right) < \deg_{\mathcal{P}(M)}\left(y^{p_j^{\lambda_j}} \right)$ by Lemma \ref{lem32}. Thus
$$\deg\left(y^{p_j^{m}} \right)=\deg_{\mathcal{P}(M)}\left(y^{p_j^{m}} \right) < \deg_{\mathcal{P}(M)}\left(y^{p_j^{\lambda_j}} \right)=\deg\left(y^{p_j^{\lambda_j}} \right)=\deg(x)=\delta({\mathcal{P}(G)}),$$
a contradiction. This completes the proof.
\end{proof}

\section{Proof of Theorems \ref{main-3} and \ref{main-4}}\label{sec-main-3-4}

In this section, $G$ is a noncyclic nilpotent group of order $n$ with $r\geq 2$. For two distinct vertices $u,v\in G$, we write $u \sim v$ if they are adjacent in $\mathcal{P}(G)$.

\begin{proposition}\label{prop.abelian}
Let $x\in G\setminus\{e\}$. Suppose that the center $Z(P_k)$ of $P_k$ is noncyclic for some $k\in \tau_x$ and that one of the following two conditions holds:
\begin{enumerate}[\rm(i)]
\item $r=3$,
\item $2\phi \left(\underset{i\in \tau_x\setminus\{k\}}\prod p_i \right) \geq \underset{i\in \tau_x\setminus\{k\}}\prod p_i$.
\end{enumerate}
Then $\deg(x) \leq \deg\left(x^{p_k}\right)$.
\end{proposition}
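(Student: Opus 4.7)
The plan is to prove the stronger set-inclusion $N[x]\subseteq N[x^{p_k}]$ of closed neighborhoods in $\mathcal{P}(G)$, from which $\deg(x)\le\deg(x^{p_k})$ follows immediately via $\deg(v)=|N[v]|-1$. I would start by noting that $k\in\tau_x$ forces $x_k\neq e$ in the $p_k$-group $P_k$, so $p_k\mid o(x)$ and $o(x^{p_k})=o(x)/p_k<o(x)$; in particular $x\neq x^{p_k}$. The one further ingredient is that the power graph of a cyclic group is complete, so any two distinct elements of $\langle x\rangle$ are adjacent in $\mathcal{P}(G)$.

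The main step is a case analysis on an arbitrary $w\in N[x]$ to show $w\in N[x^{p_k}]$. Either $w\in\langle x\rangle$ (this covers $w=x$ as well as any neighbor $w$ of $x$ with $w$ a power of $x$), in which case $w$ and $x^{p_k}$ both sit in the cyclic group $\langle x\rangle$, forcing $w=x^{p_k}$ or $w\sim x^{p_k}$; or else $w\sim x$ with $x\in\langle w\rangle$, so $\langle x^{p_k}\rangle\subseteq\langle x\rangle\subseteq\langle w\rangle$. In the latter subcase the equality $w=x^{p_k}$ is excluded by the size estimate $|\langle w\rangle|\ge o(x)>o(x^{p_k})$, so $x^{p_k}$ is a positive power of $w$ distinct from $w$, giving $w\sim x^{p_k}$. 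In every case $w\in N[x^{p_k}]$, and the inclusion follows.

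The main obstacle is conceptual rather than technical: neither the noncyclicity of $Z(P_k)$ nor the arithmetic conditions (i) and (ii) play any visible role in the inclusion above. I expect the authors' actual argument is a more refined quantitative comparison of $\deg(x)$ and $\deg(x^{p_k})$ via Proposition~\ref{degprime}, in which the noncyclicity of $Z(P_k)$ is used to exhibit explicit elements of $N[x^{p_k}]\setminus N[x]$ (for instance $zx$ for $z\in Z(P_k)$ of order $p_k$ with $z\notin\langle x_k\rangle$, which exists precisely because $Z(P_k)$ is noncyclic) while (i) or (ii) controls error terms arising from elements of $\langle x\rangle$ that lie in multiple maximal cyclic subgroups. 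Such sharper bookkeeping is presumably what is needed in the proofs of Theorems~\ref{main-3} and~\ref{main-4}, but for the stated inequality the short inclusion argument above already suffices.
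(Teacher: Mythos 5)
Your proposed inclusion $N[x]\subseteq N[x^{p_k}]$ is false, and the error is the claim that ``the power graph of a cyclic group is complete, so any two distinct elements of $\langle x\rangle$ are adjacent.'' By \cite[Theorem 2.12]{CGS-2009} (quoted in Section 1.3 of the paper), $\mathcal{P}(C_m)$ is complete if and only if $m$ is a prime power; so your Case 1 breaks down precisely when $|\tau_x|\geq 2$, which is the only nontrivial case. Concretely, suppose $o(x)=p_1^{\beta_1}p_k^{\beta_k}$ with $\beta_1,\beta_k\geq 1$ and take $w=x^{p_1^{\beta_1}}$, an element of $\langle x\rangle$ of order $p_k^{\beta_k}$. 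Then $w\sim x$, but $w$ is not adjacent to $x^{p_k}$: one checks $w\notin\langle x^{p_k}\rangle$ (that subgroup contains no element of order $p_k^{\beta_k}$) and $x^{p_k}\notin\langle w\rangle$ (since $o(x)/p_k\nmid p_k^{\beta_k}$). So $N[x]\setminus N[x^{p_k}]$ is nonempty whenever $|\tau_x|\geq 2$, and the inequality cannot be obtained by a neighborhood inclusion. Your own closing paragraph correctly diagnoses the situation: the hypotheses must enter, which is a strong signal that the ``short inclusion argument'' cannot be right.

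The paper's proof does exactly the counting you anticipate. Writing $H=\langle x\rangle$, $K=\langle x^{p_k}\rangle$, it splits the relevant vertices into $A$ (adjacent to $x$ but not to $x^{p_k}$), $B$ (adjacent to $x^{p_k}$ but not to $x$) and $C$ (adjacent to both), so that $\deg(x)-\deg(x^{p_k})=|A|-|B|$. The set $A=(H\setminus(S\cup K))\cup\{x^{p_k}\}$ ($S$ the generators of $H$) is computed exactly, giving $|A|=o(x)-\phi(o(x))-o(x^{p_k})+1$; the noncyclicity of $Z(P_k)$ supplies $u\in Z(P_k)\setminus H$ of order $p_k$, whence $z=ux\notin H$ satisfies $z^{p_k}=x^{p_k}$ and contributes at least $\phi(o(x))\prod_{i\in[r]\setminus\tau_x}p_i^{\alpha_i}$ elements to $B$; and hypothesis (ii) (or, when $r=3$ and $p_1=2$, a second element $u^2x$ giving a $3\phi(\cdot)$ bound) is exactly what makes $|B|\geq|A|$. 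To salvage your write-up you would need to replace the inclusion argument entirely by this comparison of $|A|$ and $|B|$.
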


\begin{proof}
Let $y=x^{p_k}$, $H=\langle x \rangle$ and $K=\langle y \rangle$. Note that if $z \sim x$ for some $z\in G\setminus H$, then $z \sim y$. If $|\tau_x|=1$, then $x$ is of prime power order and so the induced subgraph $\mathcal{P}(H)$ of $\mathcal{P}(G)$ is complete. This implies that $\deg_{\mathcal{P}(H)}(x) = \deg_{\mathcal{P}(H)}(y)$ and it follows that $\deg(x) \leq \deg(y)$. So assume that $|\tau_x|\geq 2$. Define the following three subsets of $G$:
\begin{align*}
A & = \{z \in G : z \sim x, z \not\sim y\},\\
B & = \{z \in G : z \sim y, z \not\sim x\},\\
C & = \{z \in G : z \sim x, z \sim y\}.
\end{align*}
Then $A,B,C$ are pairwise disjoint with $y\in A$ and $x\in B$. We have $\deg(x)= |A|+|C|$ and $\deg(y) = |B|+|C|$. In order to prove the proposition, it is enough to show that $|A| \leq |B|$.
	
Let $S$ be the set of generators of $H$. Then $A=(H\setminus (S\cup K))\cup \{y\}$. Since $S\cap K$ is empty, we have $|A| = |H| - \phi(|H|) - |K| +1 =o(x)-\phi(o(x))- o(y) +1$, where $o(y) = o(x)/p_k$. Let $o(x) = \underset{i\in \tau_x}\prod p_{i}^{\beta_{i}}$, where $1\leq \beta_i\leq \alpha_i$ for $i\in \tau_x$. Then
\begin{align*}
|A| & = \underset{i\in \tau_x}\prod p_{i}^{\beta_{i}-1}\left[ \underset{i\in \tau_x}\prod p_{i} -\phi\left(\underset{i\in \tau_x}\prod p_{i}\right) - \underset{i\in \tau_x\setminus\{k\}}\prod p_{i} \right] +1\\
	& = \phi(p_k)\left(\underset{i\in \tau_x}\prod p_{i}^{\beta_{i}-1}\right)\left[ \underset{i\in \tau_x\setminus \{k\}}\prod p_{i} -\phi\left(\underset{i\in \tau_x\setminus\{k\}}\prod p_{i}\right) \right] +1.
\end{align*}
	
Since $Z(P_k)$ is noncyclic, there exists an element $u\in Z(P_{k})\setminus H$ of order $p_{k}$.
Take $z: = ux$. Then $\tau_z=\tau_x$, $z\notin H=\langle x\rangle$ and $o(z)=o(x)$. This implies that $x\notin \langle z\rangle$ and so $z \nsim x$. But $z \sim y$  as $z^{p_{k}} =x^{p_k}= y$. Thus $z \in B$.
Note that $B$ is contained in $(G\setminus H)\cup\{x\}$ and that if $v \in G \setminus H$ such that $\underset{i\in \tau_x}\prod v_i$ is a generator for $\langle z\rangle$, then $v\in B$. Since $o(z)=o(x)$, we get
\begin{align*}
|B| \geq  \phi\left(\underset{i\in \tau_x} \prod p_{i}^{\beta_{i}} \right) \left(\underset{i\in [r]\setminus \tau_x} \prod p_{i}^{\alpha_{i}}\right) + 1
\geq \left(\underset{i\in \tau_x} \prod p_{i}^{\beta_{i}-1} \right)\phi\left(\underset{i\in \tau_x} \prod p_{i} \right) \left( \underset{i\in [r]\setminus \tau_x} \prod p_{i}\right) + 1, 	
\end{align*}
where $\underset{i\in [r]\setminus \tau_x} \prod p_{i}=1$ if $\tau_x=[r]$. Then
\begin{align}\label{eqn-use}
|B| - |A| & \geq \phi(p_k)\left(\underset{i\in \tau_x}\prod p_{i}^{\beta_{i}-1}\right) \left[\phi\left(\underset{i\in \tau_x\setminus\{k\}}\prod p_{i}\right)\left( 1+ \underset{i\in [r]\setminus \tau_x} \prod p_{i}\right) - \underset{i\in \tau_x\setminus \{k\}}\prod p_{i} \right].
\end{align}
Since $1+ \underset{i\in [r]\setminus \tau_x} \prod p_{i}\geq 2$, the inequality (\ref{eqn-use}) implies that $|B| - |A|\geq 0$ if condition (ii) holds.
	
Now, suppose that condition (i) holds, that is, $r=3$. Then $|\tau_x|\in\{2,3\}$. If $|\tau_x|=2$, then the condition (ii) holds and so we get $|A| \leq |B|$. Assume that $|\tau_x|=3=r$. If $p_{1} \geq 3$ or $(p_1, k) =(2,1)$, then the condition (ii) holds and so $|A| \leq |B|$.
	
So assume that $p_1 =2$ and $k> 1$. Then $p_{k} \geq 3$ and $o(u^2)=o(u)=p_k$. Take $w:= u^2x$. Then $w\notin H$ and $o(w)=o(x)$. The same argument used for $z$ in the above gives that $w \in B$.
We have $o(w)=o(z)$, but $\langle w \rangle \neq \langle z \rangle$. The latter follows from the fact that $\langle w_k\rangle =\langle u^2x_k\rangle\neq \langle ux_k\rangle=\langle z_k\rangle$ as $\langle u\rangle \cap \langle x_k\rangle =\{e\}$. Thus
$$|B| \geq  \phi(o(z)) +  \phi(o(w)) +1 = 2\phi(o(x)) +1 =2 \phi\left(p_{1}^{\beta_{1}} p_{2}^{\beta_{2}} p_{3}^{\beta_{3}}\right) +1.$$
Since $|A|=\phi(p_k)\left(p_{1}^{\beta_{1}-1} p_{2}^{\beta_{2}-1} p_{3}^{\beta_{3}-1}\right)\left[ \frac{p_1p_2p_3}{p_k} -\phi\left(\frac{p_1p_2p_3}{p_k} \right) \right] +1$, we get
\begin{align*}
|B| - |A|  \geq p_{1}^{\beta_{1}-1} p_{2}^{\beta_{2}-1} p_{3}^{\beta_{3}-1} \phi(p_{k})  \left[3\phi\left(\frac{p_1p_2p_3}{p_k} \right) - \frac{p_1p_2p_3}{p_k} \right] \geq 0.
\end{align*}
Hence $|A| \leq |B|$. This completes the proof.
\end{proof}

\medskip
\begin{proof}[{\bf Proof of Theorems \ref{main-3} and \ref{main-4}}]
Observe that if the condition $2\phi\left(\underset{i\in [r]}\prod p_i\right) \geq \underset{i\in [r]}\prod p_i$ holds, then $2\phi\left(\underset{i\in [r]\setminus\{l\}}\prod p_i\right) > \underset{i\in [r]\setminus\{l\}}\prod p_i$ for every $l\in [r]$ and hence $2\phi\left(\underset{i\in \tau_x\setminus\{l\}}\prod p_i\right) > \underset{i\in \tau_x\setminus\{l\}}\prod p_i$ for every $x\in G$ and every $l\in \tau_x$.
Note that $J\neq\emptyset$ as $G$ is noncyclic. By the given hypothesis, $Z(P_j)$ is noncyclic if and only if $j\in J$.

Suppose that $r=3$ or $2\phi\left(\underset{i\in [r]\setminus\{l\}} \prod p_i\right)\geq \underset{i\in [r]\setminus\{l\}} \prod p_i$ for every $l\in [r]$. Consider two subsets $A$ and $B$ of $G$ defined by
\begin{align*}
A & :=\{u\in G:\deg(u)=\delta(\mathcal{P}(G)) \text{ and } J\subseteq \tau_u\},\\
B & :=\{u\in A: \langle u_j\rangle\in\mathcal{M}(P_j) \text{ for every }j\in J\}.
\end{align*}
We assert that $A$ and $B$ are nonempty.

\underline{$A\neq \emptyset$}: Choose an element $w\in G$ such that $\deg(w)=\delta(\mathcal{P}(G))$ and $|\tau_w\cap J|$ is maximum. We claim that $J\subseteq \tau_w$. Suppose that $k\notin \tau_w$ for some $k\in J$. Let $z_k\in P_k$ be an element of order $p_k$. Define $x:=wz_k\in G\setminus\{e\}$. Then $x^{p_k}=(wz_k)^{p_k}=w^{p_k}$. Since $o(w)$ and $p_k$ are relatively prime, we have $\langle w\rangle =\langle w^{p_k}\rangle$ and so $\deg(w)=\deg(w^{p_k})=\deg(x^{p_k})$. Since $Z(P_k)$ is noncyclic, we apply Proposition \ref{prop.abelian} to get $\deg(x)\leq \deg(x^{p_k})=\deg(w)=\delta(\mathcal{P}(G))$. This implies $\deg(x)=\delta(\mathcal{P}(G))$. Since $|\tau_x\cap J|=|\tau_w\cap J|+1$, we get a contradiction to the maximality of $|\tau_w\cap J|$.\medskip

\underline{$B\neq \emptyset$}: Choose an element $v\in A$ such that the order of $\underset{j\in J}\prod v_j$ is maximum. We claim that $\langle v_j\rangle\in\mathcal{M}(P_j)$ for every $j\in J$. Suppose that this is not the case for some $k\in J$. Then there exists $z_k\in P_k$ such that $z_k ^{p_k}=v_k$. Define $x\in G$ such that
$$x_i=
\begin{cases}
v_i &\text{if $i\neq k$}\\
z_k &\text{if $i=k$}
\end{cases}.$$
Then $J\subseteq \tau_x$. For $i\in [r]\setminus \{k\}$, we have $\langle v_i\rangle =\langle v_i^{p_k}\rangle$ as $o(v_i)$ and $p_k$ are relatively prime. It follows that $\langle v\rangle=\langle x^{p_k}\rangle$ and so $\deg(v)=\deg(x^{p_k})$. Since $Z(P_k)$ is noncyclic, apply Proposition \ref{prop.abelian} to get $\deg(x)\leq \deg(x^{p_k})=\deg(v)=\delta(\mathcal{P}(G))$. This implies $\deg(x)=\delta(\mathcal{P}(G))$ and so $x\in A$. Since $o(z_k)>o(v_k)$, the order of the element $\underset{j\in J}\prod x_j$ is greater than that of $\underset{j\in J}\prod v_j$. This contradicts to the maximality of the order of $\underset{j\in J}\prod v_j$.\medskip

Now let $z\in B$. Then $\langle z_j\rangle\in\mathcal{M}(P_j)$ for every $j\in J$. Since $P_i$ is cyclic for every $i\in [r]\setminus J$, it follows that $z$ is contained in a unique element $M$ of $\mathcal{M}(G)$.\medskip

First consider $J=[r]$. Then $\langle z\rangle=M$ and so $\delta(\mathcal{P}(G))=\deg(z)=o(z)-1$. Since $\deg(x)=o(x)-1$ for every $\langle x\rangle\in \mathcal{M}(G)$, it follows that $\langle z\rangle$ must be of minimum order among all maximal cyclic subgroups of $G$. This proves Theorems \ref{main-3}(ii) and \ref{main-4}.\medskip

Next we prove Theorem \ref{main-3}(i). Accordingly, we consider $J\neq [r]$ and assume that $2\phi\left(\underset{i\in [r]}\prod p_i\right) \geq \underset{i\in [r]}\prod p_i$. Since $\delta(\mathcal{P}(G))=\deg(z)$, by Theorem \ref{main-2}(ii), there exists $k\in [r]$ and a generator $y$ of $M$ such that
$$z = y^{p_k^{\lambda_k}},$$
where $o(y_k)=p_k^{\gamma_k}$ with $1\leq \gamma_k\leq\alpha_k$ and $\lambda_k \in\{0,1,\ldots,\gamma_k\}$ is the largest integer with the property that $M$ is the only maximal cyclic subgroup of $G$ containing $y^{p_k^{\lambda_k}}$.

Suppose that $\lambda_k=0$. Then $z=y$ generates $M\in \mathcal{M}(G)$. Define $x:=\underset{j\in J} \prod z_j \in G$. Since $P_i$ cyclic for every $i\in [r]\setminus J$, Proposition \ref{propcomp}(i) implies that $\delta(\mathcal{P}(G))=\deg(z)>\deg(x)$, a contradiction. Thus $\lambda_k\geq 1$. This implies that $z_k=y_k^{p_k^{\lambda_k}}$ does not generate a maximal cyclic subgroup of $P_k$. So $k\notin J$, that is, $k\in [r]\setminus J$. Since $P_k$ is cyclic, it then follows from the definition of $\lambda_k$ that $\lambda_k=\gamma_k=\alpha_k$. Thus,
$z = y^{p_k^{\alpha_k}}$.
We note that $\deg\left(y^{p_k^{\alpha_k}}\right)$ is independent of the generator $y$ of $M$ by Corollary \ref{rem2}.

It remains to prove that $\langle y\rangle \in \mathcal{M}(G)$ is of minimum order. Consider $\langle w\rangle \in \mathcal{M}(G)$ of minimum order. Then $o(y)\geq o(w)$. Define $I:=[r]\setminus J\neq\emptyset$. Put $o(y)=\left(\underset{i\in I}\prod p_i^{\alpha_i}\right)\left(\underset{j\in J}\prod p_j^{\gamma_j}\right)$ and $o(w)=\left(\underset{i\in I}\prod p_i^{\alpha_i}\right)\left(\underset{j\in J}\prod p_j^{\delta_j}\right)$, where $1\leq \delta_j\leq \gamma_j <\alpha_j$ for every $j\in J$. Then $o\left(y^{p_k^{\alpha_k}}\right)=\left(\underset{i\in I\setminus\{k\}}\prod p_i^{\alpha_i}\right)\left(\underset{j\in J}\prod p_j^{\gamma_j}\right)$ and $o\left(w^{p_k^{\alpha_k}}\right)=\left(\underset{i\in I\setminus\{k\}}\prod p_i^{\alpha_i}\right)\left(\underset{j\in J}\prod p_j^{\delta_j}\right)$. We have
\begin{align*}
o\left(y^{p_k^{\alpha_k}}\right)-\phi\left(o\left(y^{p_k^{\alpha_k}}\right)\right) & =\left(\underset{i\in I\setminus\{k\}}\prod p_i^{\alpha_i -1}\right)\left(\underset{j\in J}\prod p_j^{\gamma_j -1}\right)\left[\underset{l\in [r]\setminus\{k\}}\prod p_l -\phi\left(\underset{l\in [r]\setminus\{k\}}\prod p_l\right)\right]\\
 & \geq \left(\underset{i\in I\setminus\{k\}}\prod p_i^{\alpha_i -1}\right)\left(\underset{j\in J}\prod p_j^{\delta_j -1}\right)\left[\underset{l\in [r]\setminus\{k\}}\prod p_l -\phi\left(\underset{l\in [r]\setminus\{k\}}\prod p_l\right)\right]\\
 & = o\left(w^{p_k^{\alpha_k}}\right)-\phi\left(o\left(w^{p_k^{\alpha_k}}\right)\right).
\end{align*}
Then by Proposition \ref{degprime}, we have	
\begin{align*}
\deg\left(y^{p_k^{\alpha_k}}\right) &= o\left(y^{p_k^{\alpha_k}}\right)-\phi\left(o\left(y^{p_k^{\alpha_k}}\right) \right) + p_k^{\alpha_k}\left(\underset{i\in I\setminus\{k\}}\prod \phi\left(p_i^{\alpha_i} \right)\right) \left( \underset{j\in J}\prod \phi\left(p_j^{\gamma_j} \right)\right) -1 \\
 &\geq  o\left(w^{p_k^{\alpha_k}}\right)-\phi\left(o\left(w^{p_k^{\alpha_k}}\right) \right) + p_k^{\alpha_k}\left(\underset{i\in I\setminus\{k\}}\prod \phi\left(p_i^{\alpha_i} \right)\right) \left( \underset{j\in J}\prod \phi\left(p_j^{\delta_j} \right)\right) -1\\
 & = \deg\left(w^{p_k^{\alpha_k}}\right).
\end{align*}
Since $\delta(\mathcal{P}(G))=\deg\left(y^{p_k^{\alpha_k}}\right)$, we must have $\deg\left(w^{p_k^{\alpha_k}}\right)=\deg\left(y^{p_k^{\alpha_k}}\right)$. This happens if only if $\delta_j =\gamma_j$ for every $j\in J$ and it follows that $o(y)=o(w)$. Thus the maximal cyclic subgroup $\langle y\rangle =M$ of $G$ is of minimum order.
\end{proof}

\section{Proof of Theorem \ref{main-5}} \label{sec-main-5}

Suppose that $G$ is a noncyclic nilpotent group of order $p_1^{\alpha_1}p_2^{\alpha_2}$ in which $P_1$ is cyclic and $Z(P_2)$ is noncyclic.
Let $y\in G$ be such that $M:=\langle y\rangle\in\mathcal{M}(G)$ is of minimum order. Put $|M|=o(y) = p_1^{\alpha_1}p_2^{\gamma_2}$, where $1\leq \gamma_2 <\alpha_2$.
Let $x\in G\setminus\{e\}$ be arbitrary. We claim that $\deg(x)\geq \deg(y_2)$. Applying Proposition \ref{propcomp}, we have
\begin{align}\label{eqn-6}
\deg(y_1)>\deg(y)>\deg(y_2).
\end{align}
If $|\tau_x|=1$, then $\deg(x) \geq \min \{\deg(y_1), \deg(y_2)\}=\deg(y_2)$ using (\ref{eqn-6}) and Proposition \ref{mindeg_in_pi}.

So assume that $|\tau_x|=2$. Consider $\langle z\rangle \in\mathcal{M}(G)$ containing $x$. Proposition \ref{degprime} gives that $\deg(y_2) = \dfrac{o(y_2)}{p_2} + p_1^{\alpha_1}\phi(o(y_2)) - 1$ and $\deg(z_2) = \dfrac{o(z_2)}{p_2} + p_1^{\alpha_1}\phi(o(z_2)) - 1$. Since $o(z_2)\geq o(y_2)$, it follows that $\deg(z_2) \geq \deg(y_2)$. We show that $\deg(x)\geq \deg(z_2)$.

Put $o(x)=p_1^{\beta_1}p_2^{\beta_2}$ and $o(z)=p_1^{\alpha_1}p_2^{\eta_2}$ for some positive integers $\beta_1,\beta_2,\eta_2$ with $\beta_2\leq \eta_2<\alpha_2$. Suppose that $\langle x_2\rangle = \langle z_2\rangle$. Then $\beta_2=\eta_2$. By Proposition \ref{degprime}, we have
\begin{align*}
\deg(x) & =  p_1^{\beta_1}p_2^{\eta_2} -  \phi\left(p_1^{\beta_1}p_2^{\eta_2}\right)+ \left(p_1^{\alpha_1}-p_1^{\beta_1-1}\right)\left(p_2^{\eta_2}-p_2^{\eta_2-1}\right)-1\\
 & = p_1^{\beta_1}p_2^{\eta_2-1} + p_1^{\alpha_1}p_2^{\eta_2} -  p_1^{\alpha_1}p_2^{\eta_2-1} -1
\end{align*}
and
\begin{align*}
\deg(z_2) = p_2^{\eta_2-1} + p_1^{\alpha_1}p_2^{\eta_2}-p_1^{\alpha_1}p_2^{\eta_2 -1}-1.
\end{align*}
Then $\deg(x) - \deg(z_2) = p_1^{\beta_1}p_2^{\eta_2-1} - p_2^{\eta_2-1}>0$ gives $\deg(x) > \deg(z_2)$. In particular, $\deg(x_1z_2)>\deg(z_2)$.

Now suppose that $x_2$ is not a generator of $\langle z_2\rangle$. Then $l:=\eta_2 -\beta_2 >0$. There exist elements
$$x_2=x_{2,0},\; x_{2,1},\; x_{2,2},\ldots , x_{2,l-1},\; x_{2,l}$$
in $\langle z_2\rangle$ such that $o(x_{2,k})=p_2^{\beta_2 +k}$ and $x_{2,k}^{p_2}=x_{2,k-1}$ for $1\leq k\leq l$. We have $\langle x_{2,l}\rangle=\langle z_2\rangle$, as $o(x_{2,l})=p_2^{\beta_2 +l}=p_2^{\eta_2}=o(z_2)$. The fact that $p_2$ and $o(x_1)$ are relatively prime implies
$\left\langle x_{1}x_{2,k-1}\right\rangle=\left\langle x_{1}x_{2,k}^{p_2}\right\rangle=\left\langle (x_{1}x_{2,k})^{p_2}\right\rangle$. So
$$\deg(x_{1}x_{2,k-1})=\deg((x_{1}x_{2,k})^{p_2})\geq \deg(x_{1}x_{2,k})$$
for $1\leq k\leq l$, where the last inequality holds by Proposition \ref{prop.abelian} as $Z(P_2)$ is noncyclic and $2\phi(p_1)\geq p_1$. We thus have
$$\deg(x)=\deg(x_1x_2)=\deg(x_1x_{2,0})\geq \deg(x_1x_{2,1})\geq \cdots\geq \deg(x_1x_{2,l})=\deg(x_1z_2)>\deg(z_2),$$
where the second last equality holds as $\langle x_1x_{2,l}\rangle =\langle x_1z_2\rangle$.
This completes the proof.

\section*{Acknowledgement}
The first author would like to thank the National Institute of Science
Education and Research, Bhubaneswar, for the facilities provided when working as a visiting
fellow in the School of Mathematical Sciences.

\noindent\underline{\bf Addresses}

\begin{enumerate}
	
	\item[] {\bf Ramesh Prasad Panda ({\tt rppanda@iitk.ac.in})}
	\begin{enumerate}[\rm(1)]
		\item Department of Mathematics and Statistics, Indian Institute of Technology Kanpur, Uttar Pradesh--208016, India.
	\end{enumerate}
	
\item[] {\bf Kamal Lochan Patra ({\tt klpatra@niser.ac.in})\\
Binod Kumar Sahoo ({\tt bksahoo@niser.ac.in})}
\begin{enumerate}[\rm(1)]
\item School of Mathematical Sciences, National Institute of Science Education and Research, Bhubaneswar, P.O.-Jatni, District-Khurda, Odisha-752050, India.

\item Homi Bhabha National Institute, Training School Complex, Anushakti Nagar, Mumbai-400094, India.
\end{enumerate}

\end{enumerate}
\end{document}